\let\oldtocsection=\tocsection
\let\oldtocsubsection=\tocsubsection
\let\oldtocsubsubsection=\tocsubsubsection
\renewcommand{\tocsection}[2]{\hspace{0em}\oldtocsection{#1}{#2}\bfseries}
\renewcommand{\tocsubsection}[2]{\hspace{1.8em}\oldtocsubsection{#1}{#2}}
\renewcommand{\tocsubsubsection}[2]{\hspace{4.4em}\oldtocsubsubsection{#1}{#2}}
\renewcommand\subsection{\@startsection{subsection}{2}%
  \z@{-.5\linespacing\@plus-.7\linespacing}{.5\linespacing}%
  {\normalfont\scshape}}
\renewcommand\subsubsection{\@startsection{subsubsection}{3}%
  \z@{.5\linespacing\@plus.7\linespacing}{.5\linespacing}%
  {\normalfont\scshape}}
\newtheorem{theorem}{Theorem}[section]
\newtheorem{lemma}[theorem]{Lemma}
\newtheorem{question}[theorem]{Question}
\newtheorem{remark}[theorem]{Remark}
\newtheorem{Fact}[theorem]{Fact}
\newtheorem{Claim}[theorem]{Claim}
\theoremstyle{definition}
\newtheorem{definition}[theorem]{Definition}
\newcommand{\dom}[1]{\ensuremath{\mathrm{dom}}(#1)}
\newcommand{\set}[2]{\ensuremath{\{#1 \,|\, #2 \}}}
\newcommand{\seq}[2]{\ensuremath{\langle #1 \,|\, #2 \rangle}}
\newcommand{\restr}[2]{\ensuremath{#1 \! \upharpoonright \! #2}}
\renewcommand{\iff}{\leftrightarrow}
\newcommand{\sub}{\subseteq}
\newcommand{\then}{\rightarrow}
\newcommand{\bb}{\mathbb}
\newcommand{\beq}{\begin{equation}}
\newcommand{\eeq}{\end{equation}}
\newcommand{\brm}{\begin{remark}\begin{rm}}
\newcommand{\erm}{\end{rm}\end{remark}}
\newcommand{\mx}{\mathrm}
\newcommand{\bce}{\begin{compactenum}}
\newcommand{\ece}{\end{compactenum}}
\newcommand{\et}{\mathrel{\&}}
\newcommand{\Ult}{\mathrm{Ult}}
\newcommand{\cof}{\mathrm{cof}}
\newcommand{\cf}{\mathrm{cf}}
\newcommand{\tcf}{\mathrm{tcf}}
\newcommand{\lh}{\mathrm{lh}}
\newcommand{\Add}{\mathrm{Add}}
\newcommand{\Sacks}{\mathrm{Sacks}}
\newcommand{\R}{\bb{R}}
\newcommand{\Q}{\bb{Q}}
\renewcommand{\P}{\bb{P}}
\newcommand{\T}{\bb{T}}
\newcommand{\M}{\bb{M}}
\newcommand{\x}{\times}
\newcommand{\TP}{{\sf TP}}
\newcommand{\ZFC}{\sf ZFC}
\newcommand{\GCH}{\sf GCH}
\newcommand{\SCH}{\sf SCH}
\newcommand{\SR}{{\sf SR}}
\newcommand{\AP}{{\sf AP}}
\newcommand{\uu}{\mathfrak{u}(\kappa)}
\newcommand{\Jbd}{J^{\mx{bd}}_\kappa}
\newcommand{\Jbdpar}[1]{J^{\mx{bd}}_{#1}}
\newcommand{\Qt}{\Q_{\bar{\theta}}}
\renewcommand{\Upsilon}{\lambda}
\newcommand{\Ups}{\lambda}
\newcommand{\Pm}{\R^{\mathrm{Mag}}}
\newcommand{\Qm}{\Q^{\mathrm{Mag}}_{\vec{U}}}
\newcommand{\Ref}{\mathrm{Ref}}
\newcommand{\CSR}{{\sf CSR}}
\newcommand{\Ah}{A^{\mathrm{hom}}}
\begin{document}

\title{Small $\uu$ at singular $\kappa$ with compactness at $\kappa^{++}$}

\author{Radek Honzik}
\address[Honzik]{
Charles University, Department of Logic,
Celetn{\' a} 20, Prague~1, 
116 42, Czech Republic
}
\email{radek.honzik@ff.cuni.cz}
\urladdr{logika.ff.cuni.cz/radek}

\author{{\v S}{\'a}rka Stejskalov{\'a}}
\address[Stejskalov{\'a}]{
Charles University, Department of Logic,
Celetn{\' a} 20, Prague~1, 
116 42, Czech Republic
}
\email{sarka.stejskalova@ff.cuni.cz}
\urladdr{logika.ff.cuni.cz/sarka}

\thanks{
Both authors were supported by FWF/GA{\v C}R grant \emph{Compactness principles and combinatorics} (19-29633L)}

\begin{abstract}
We show that the tree property, stationary reflection and the failure of approachability at $\kappa^{++}$ are consistent with $\uu = \kappa^+ < 2^\kappa$, where $\kappa$ is a singular strong limit cardinal with the countable or uncountable cofinality. As a by-product, we show that if $\lambda$ is a regular cardinal, then stationary reflection at $\lambda^+$ is indestructible under all $\lambda$-cc forcings (out of general interest, we also state a related result for the preservation of club stationary reflection).
\end{abstract}

\keywords{Ultrafilter number; Tree property; Stationary reflection; Approachability}
	\subjclass[2010]{03E55, 03E35}
	\maketitle

\tableofcontents

\section{Introduction}\label{sec:intro}

Compactness properties at successor cardinals have been recently extensively studied, with the focus on the tree property, stationary reflection and the failure of approachability (see Section \ref{sec:prelim-c} for definitions). The underlying goal is to find out whether these principles can hold at the successor or the double successor of a singular cardinal, a long interval of cardinals, or whether they determine the continuum function in some non-trivial way. In our paper we investigate these properties from yet another direction and study their compatibility with a small ultrafilter number $\uu$ (where ``small'' means smaller than $2^\kappa$).

Recall that $\uu$ is the least cardinal $\alpha$ such that there exists a base $B$ of a uniform ultrafilter $U$ on $\kappa$ of size $\alpha$ ($U$ is uniform if every $X \in U$ has size $\kappa$ and $B$ is a base of a uniform ultrafilter $U$ if for every $X \in U$ there is $Y \in B$ with $Y \sub X$). For all infinite $\kappa$, $\uu$ is always at least $\kappa^+$ by \cite{GS:small}. It is of interest to study whether $\uu <2^\kappa$ is consistent for various $\kappa$. By \cite{GRIG:ideals}, the consistency of $\ZFC$ is sufficient for $\mathfrak{u}(\omega) = \omega_1$ with $2^\omega = \omega_2$ as it can be obtained by the iteration (and also a product) of the Sacks forcing up to $\omega_2$. For inaccessible $\kappa$, it is not clear whether a small $\uu$ has any large cardinal strength, but supercompact cardinals are used in the known constructions (see for instance \cite{GS:pol,F:u}). For a strong limit singular $\kappa$, more information is known: by a recent result in \cite{GGS:u}, at a strong limit $\aleph_\omega$ the consistency strength of $\mathfrak{u}(\aleph_\omega) < 2^{\aleph_\omega}$ is exactly that of $\aleph_{\omega+1}<2^{\aleph_\omega}$ (it is hard to speculate what it says about a small $\uu$ regarding a lower bound for a regular $\kappa$). By a recent result in \cite{RS:u}, it is consistent that $\uu$ is small for a weakly inaccessible $\kappa$ (in fact for $\kappa = 2^\omega)$ and for a successor of a singular cardinal (more specifically, $\aleph_{\omega+1}$). It is open whether $\uu < 2^\kappa$ is consistent for a successor of a regular cardinal.

The ultrafilter number $\uu$ is one of the \emph{generalized cardinal invariants} which study the combinatorial properties of the spaces $\kappa^\kappa$ or $2^\kappa$ for topological, purely combinatorial, or forcing-related reasons. Since the tree property and the failure of approachability at $\kappa^{++}$ both imply $2^\kappa>\kappa^+$, they make the structure of the generalized cardinal invariants at $\kappa$ possibly non-trivial. It is natural to ask to what extent the invariants can be manipulated while ensuring compactness at $\kappa^{++}$.

For $\kappa = \omega$, this problem is easier to grasp because the cardinal invariants at $\omega$ have been studied for some time, as have been the forcings to force the tree property and other principles at $\omega_2$.\footnote{However, to our knowledge there is no publication which surveys which cardinal invariants patterns at $\omega$ can be realized with compactness at $\omega_2$.} Let us list just a few examples, focusing only on the tree property and stationary reflection for brevity. The iteration of the Sacks forcing at $\omega$ up to a weakly compact cardinal $\lambda$ forces $2^\omega = \omega_2 = \lambda$, the tree property and stationary reflection at $\omega_2$, and keeps most of the cardinal invariants at $\omega_1$ (in particular $\mathfrak{u}(\omega) = \omega_1$ by \cite{GRIG:ideals}). The Mitchell forcing iterated up to a weakly compact $\lambda$ forces $2^\omega = \omega_2 = \lambda$, the tree property and stationary reflection at $\omega_2$ and $\mathfrak{u}(\omega) = \omega_2$. A proper forcing iteration from \cite{FT:tp} iterated up to a weakly compact and reflecting $\lambda$ forces ${\sf BPFA}$ with $2^\omega = \omega_2 = \lambda$, the tree property and stationary reflection at $\omega_2$, with the majority of the cardinal invariants at $\omega_2$ (including $\mathfrak{u}(\omega)$).

For a regular $\kappa>\omega$, the situation is less studied. It is open how to generalize the method from \cite{FT:tp} because it is based on the notion of proper forcing. The Mitchell forcing and the Sacks forcings can still be used to obtain the tree property and stationary reflection at $\kappa^{++}$, but the invariants in these generic extensions are not automatically the same as in the case of $\kappa = \omega$ -- for instance it is not known whether the generalized Sacks iteration $\Sacks(\kappa,\lambda)$ forces a small $\uu$ (the argument from \cite{GRIG:ideals} fails because the perfect trees now have limit levels). In our paper under preparation \cite{vFHS:u} we use a variant of the Mathias forcing iteration based on \cite{F:u} to obtain a model where $\kappa>\omega$ is regular (in fact supercompact), $2^\kappa$ is arbitrarily large, $\uu$ is small and the compactness principles hold at $\kappa^{++}$.

In this paper, we focus on  $\kappa$ which is a strong limit singular cardinal. The possibility of having $\kappa$ singular is even more interesting from the point of compactness at $\kappa^{++}$: it combines three intriguing properties -- the necessary failure of $\SCH$ at $\kappa$, compactness at $\kappa^{++}$ and non-trivial cardinal invariants at $\kappa$. 

The paper is structured as follows. In Section \ref{sec:prelim} we review some basic definitions and facts a provide a very brief introduction to pcf notions which appear in the proof. 

In Section \ref{sec:review}, we review the original argument of Garti and Shelah from \cite{GS:pol,GS:small} which yields a model with a singular strong limit $\kappa$ which violates $\SCH$ and $\uu = \kappa^+$.

In Section \ref{sec:countable}, we use the forcing from Garti and Shelah to define a variant of the Mitchell forcing followed by the Prikry forcing which in addition to $\uu = \kappa^+$ forces also the tree property, stationary reflection and the failure of approachability at $\kappa^{++}$. All three arguments use a form of indestructibility of the relevant compactness property by certain $\kappa^+$-cc forcings. The argument for the tree property in Section \ref{sec:tp} is based on the fact that over a certain type of models (such as the one we consider here), the tree property is indestructible under a wide class of $\kappa^+$-cc forcings, in particular under the Prikry forcing; the argument is based on the indestructibility result which appears in \cite{HS:ind}. The arguments for stationary reflection and the failure of approachability in Sections \ref{sec:sr} and \ref{sec:ap}, respectively, use stronger forms of indestructibility which hold over any model. We show that for a regular $\lambda$, stationary reflection at $\lambda^+$ is indestructible under all $\lambda$-cc forcings (this improves the existing results; see Theorem \ref{th:sr}). As a matter of general interest, we also show that a stronger form of stationary reflection (club stationary reflection) is preserved by Cohen forcings (and some other forcings); see Theorem \ref{th:csr}. For  the failure of approachability, we use a result from \cite{GK:a} that $\neg \AP(\kappa^{++})$ is preserved under all $\kappa$-centered forcings.

All three arguments have the advantage of offering a direct generalization to other Prikry-like forcings such as Magidor forcing: in Section \ref{sec:uncountable} we show that $\uu = \kappa^+$ can also be obtained with compactness at $\kappa^{++}$ for a strong limit singular $\kappa$ of uncountable cofinality.

In Section \ref{sec:open} we discuss some open questions.

\section{Preliminaries}\label{sec:prelim}

Let us review some definitions and facts which will be used in the proofs.

\subsection{Compactness properties}\label{sec:prelim-c}

\begin{definition}
Let $\lambda$ be a regular cardinal. We say that the \emph{tree property} holds at $\lambda$, and we write $\TP(\lambda)$, if every $\lambda$-tree has a cofinal branch.
\end{definition}

\begin{definition}
Let $\lambda$ be a cardinal of the form $\lambda = \nu^{+}$ for some regular cardinal $\nu$. We say that the \emph{stationary reflection} holds at $\lambda$, and write $\SR(\lambda)$, if every stationary subset $S \sub \lambda \cap \cof(< \nu)$ reflects at a point of cofinality $\nu$; i.e.\ there is $\alpha < \lambda$ of cofinality $\nu$ such that $\alpha \cap S$ is stationary in $\alpha$.
\end{definition}

For a cardinal $\lambda$ and sequence $\bar{a}=\seq{a_\alpha}{\alpha<\lambda}$ of bounded subsets of $\lambda$, we say that an ordinal $\gamma<\lambda$ is approachable with respect to $\bar{a}$ if there is an unbounded subset $A\sub\gamma$ of order type $\cf(\gamma)$ and for  all $\beta<\gamma$ there is $\alpha<\gamma$ such that $A\cap\beta=a_\alpha$.

Let us define the \emph{ideal $I[\lambda]$ of approachable subsets of $\lambda$}:

\begin{definition}
$S\in I[\lambda]$ if and only if there are a sequence $\bar{a} = \seq{a_\alpha}{\alpha<\lambda}$ and a club $C\sub \lambda$ such that every $\gamma\in S\cap C$ is approachable with respect to $\bar{a}$.
\end{definition}

\begin{definition}
We say that \emph{the approachability property holds} at $\lambda$ if $\lambda\in I[\lambda]$, and we write $\AP(\lambda)$. If $\neg \AP(\lambda)$, we say that \emph{approachability fails at $\lambda$.}
\end{definition}

It is known that $\neg \AP(\lambda)$, for $\lambda = \nu^+$, implies the failure of $\square^*_\nu$, and in this sense $\neg \AP(\lambda)$ can be considered a compactness principle. Note that $\AP(\lambda)$ does not imply $\square^*_\nu$, so $\neg \AP(\lambda)$ is strictly stronger than the fact that there are no special $\lambda$-Aronszajn trees.

\subsection{Branch lemmas}

The so called ``branch lemma'' are used to argue that certain forcings do not add cofinal branches to trees.

The first fact is sometimes attributed to Kurepa and is stated in Kunen's book \cite{KUNbook}, Exercise V.4.21. Recall that a $\kappa^+$-tree $T$ is called \emph{well-pruned} if it has a single root and for every node $t \in T$ and level $\alpha$ above the level of $t$, there is a node $t' \in T$ on level $\alpha$ which is above $t$ in the ordering of $T$.

\begin{Fact}\label{Kunen} 
The following hold:
\bce[(i)]
\item Suppose $T$ is well-pruned $\kappa^+$-Aronszajn tree. Then for every $t \in T$, there is a level of the tree $T$ above $t$ which has size $\kappa$.
\item It follows that if $P$ is $\kappa$-cc, then it does not add a cofinal branch to $T$.
\ece
\end{Fact}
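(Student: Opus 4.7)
The statement has two parts; (ii) follows from (i) by a ``tree of possible values'' argument using $\kappa$-cc, so the main content is in (i).

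For (i), I argue by contradiction: fix $t \in T$ and suppose that for every level $\alpha > \mathrm{level}(t)$ the set of successors of $t$ on level $\alpha$ in $T$ has size $<\kappa$. Consider the subtree $T_t = \{s \in T : s \geq t\}$; it inherits well-pruning from $T$, has height $\kappa^+$, and every level has size $<\kappa$. The plan is to produce a cofinal branch through $T_t$, contradicting the Aronszajn property of $T$. Reduce first to the ``fat'' subtree
\[
F = \{s \in T_t : s \text{ has } \kappa^+ \text{ descendants in } T_t\}.
\]
A cardinal count using regularity of $\kappa^+$ shows that $F$ is downward-closed and well-pruned: if $s \in F$ and $\alpha > \mathrm{level}(s)$, the $\kappa^+$ descendants of $s$ are distributed among the fewer than $\kappa$ successors of $s$ on level $\alpha$, so at least one must itself have $\kappa^+$ descendants and hence lie in $F$; otherwise the total would be bounded by $\kappa \cdot \kappa = \kappa$. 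In particular every level of $F$ is non-empty and of size $<\kappa$.

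It remains to build a cofinal chain $\langle s_\alpha : \alpha < \kappa^+ \rangle$ in $F$. Successor steps are routine. The main obstacle is the limit step, where a chain of length $<\kappa^+$ must be extended to a single node of $F$ on the next level, while well-pruning only gives single-node extensions. I would resolve this via a continuous increasing tower $\langle M_\alpha : \alpha < \kappa^+ \rangle$ of elementary submodels of some $H(\chi)$, each of size $\kappa$ with $\kappa \subseteq M_\alpha$ and $T, F \in M_\alpha$, choosing $s_\alpha \in F \cap M_{\alpha+1}$ on level $\sup(M_\alpha \cap \kappa^+)$. Since every level of $F$ has size $<\kappa$ and $\kappa \subseteq M_\alpha$, the levels of $F$ indexed by ordinals in $M_\alpha$ are entirely contained in $M_\alpha$, so elementarity together with well-pruning of $F$ lets the chain extend coherently; the union is a cofinal branch of $T$, the desired contradiction.

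For (ii), suppose toward contradiction that $P$ is $\kappa$-cc and $\dot b$ is a $P$-name forced to be a cofinal branch of $T$. Set
\[
T' = \{s \in T : \exists p \in P,\ p \Vdash s \in \dot b\}.
\]
Then $T'$ is downward-closed and meets every level of $T$, since $\dot b(\alpha)$ has at least one possible value. For distinct $s \neq s'$ on the same level of $T'$, the witnessing conditions must be pairwise incompatible because $\dot b$ is forced to be a chain; hence the $\kappa$-cc of $P$ bounds each level of $T'$ by cardinality $<\kappa$. Moreover $T'$ is well-pruned: if $p \Vdash s \in \dot b$ and $\beta > \mathrm{level}(s)$, any extension of $p$ deciding $\dot b(\beta)$ forces a value $s' > s$, so $s' \in T'$. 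Thus $T'$ is a well-pruned $\kappa^+$-subtree of $T$ with every level of size $<\kappa$. If $T'$ has a cofinal branch in $V$, it is also a cofinal branch of $T$, contradicting Aronszajnness; otherwise $T'$ is itself Aronszajn, and applying (i) to $T'$ with $t$ its root yields a level of $T'$ above the root of size $\kappa$, contradicting the $<\kappa$ bound. The main obstacle throughout is the coherent extension of chains at limits in the construction of the cofinal branch in (i); the rest is bookkeeping together with the $\kappa$-cc antichain count.
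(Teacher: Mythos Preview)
Your argument for (ii) is correct and matches the paper's: build the subtree $T'$ of possible branch values, observe by $\kappa$-cc that its levels have size $<\kappa$, and invoke (i) to reach a contradiction.

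For (i), however, your elementary-submodel construction has a genuine gap at the limit step, and your proposed resolution does not close it. The problem is that well-prunedness of $F$ guarantees only that each \emph{single node} has successors on all higher levels; it does \emph{not} guarantee that an arbitrary chain of limit length $\delta_\alpha$ extends to a node on level $\delta_\alpha$. Elementarity cannot rescue this: even if the chain $\langle s_\beta : \beta<\alpha\rangle$ lies in $M_{\alpha+1}$, there is no first-order statement true in $H(\chi)$ asserting that this particular chain has an upper bound on level $\delta_\alpha$ --- if there were, the bound would already exist in $V$, which is exactly what is in question. Your construction never actually exploits the hypothesis that levels have size $<\kappa$ at the limit stage; you only use it to get the fat subtree $F$ and to absorb levels into the models, neither of which produces the needed upper bound.

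The paper's Fodor approach avoids recursion entirely: one fixes a node $s_\alpha$ on every level $\alpha$ and then uses the fact that each level has size $<\kappa$ to run a pressing-down argument producing a stationary set on which the chosen nodes are pairwise comparable, hence form a cofinal branch. The point is that the size bound is applied \emph{across} the levels via Fodor (a pigeonhole on the restriction maps), not level-by-level in a recursion. An essentially equivalent non-recursive fix: pick any uniform ultrafilter $U$ on $\kappa^+$, and for each $\beta$ let $t_\beta$ be the $U$-generic value of $s_\alpha\upharpoonright\beta$ (well-defined since level $\beta$ has $<\kappa$ elements); then $\langle t_\beta:\beta<\kappa^+\rangle$ is a cofinal branch. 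Either of these replaces your recursive chain-building and closes the gap.
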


\begin{proof}
(i) For contradiction assume that all levels above $t$ have size $<\kappa$, and using Fodor's lemma, find a stationary set on which the nodes of the tree form a cofinal branch.

(ii) If $\dot{b}$ is a name for a cofinal branch through $T$, it can be used to build back in $V$ a subtree $S$ of $T$ of height $\kappa^+$ with levels of size $<\kappa$. By (i), $S$ must have a cofinal branch, and it is a cofinal branch through $T$ as well. Contradiction.
\end{proof}

Fact \ref{Kunen} is useful for showing the indestructibility result in \cite{HS:ind}, which we apply in this paper in a different setting.

We shall further use the following lemma due to Unger (see \cite[Lemma 6]{UNGER:1}), which generalizes an analogous result in \cite{JSkurepa} which is formulated for $\kappa = \omega$:

\begin{Fact}\label{f:spencer}
Let $\kappa,\lambda$ be cardinals with $\lambda$ regular and $\kappa < \lambda \le 2^\kappa$. Let $P$ be $\kappa^+$-cc and $Q$ be $\kappa^+$-closed. Let $\dot{T}$ be a $P$-name for a $\lambda$-tree. Then in $V[P]$, forcing with $Q$ cannot add a cofinal branch through $T$.
\end{Fact}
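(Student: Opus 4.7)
The plan is to run a standard Silver-style ``splitting tree of conditions'' argument entirely inside $V[P]$. Assume for contradiction that some $q_0 \in Q$ forces $\dot{b}$ to be a cofinal branch of $T$ which does not belong to $V[P]$. I construct a system $\{q_s : s \in 2^{\le \kappa}\}$ of conditions in $Q$, strictly increasing ordinals $\gamma_\alpha < \lambda$ for $\alpha \le \kappa$, and nodes $b_s \in T$ on level $\gamma_{|s|}$, such that $q_s \le q_t$ whenever $t \sub s$, $q_s \Vdash b_s \in \dot{b}$, and $b_{s \conc 0} \ne b_{s \conc 1}$ for every $s \in 2^{<\kappa}$.

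At a successor stage, given $q_s$ the set $A_s = \{t \in T : q_s \Vdash t \in \dot{b}\}$ cannot be cofinal in $T$, as otherwise $q_s$ would force $\dot{b}$ to coincide with the $V[P]$-branch $A_s$, contradicting that $\dot{b}$ is new. So there is a level $\gamma_{|s|+1} > \gamma_{|s|}$ on which $q_s$ has not decided $\dot{b}$, and I can pick $q_{s \conc 0}, q_{s \conc 1} \le q_s$ together with distinct nodes $b_{s \conc 0}, b_{s \conc 1}$ at that level such that $q_{s \conc i} \Vdash b_{s \conc i} \in \dot{b}$. At a limit stage $\delta \le \kappa$, for each $s \in 2^\delta$ I use the $\kappa^+$-closure of $Q$ to find a common lower bound of $\langle q_{s \upharpoonright \alpha} : \alpha < \delta \rangle$, and then strengthen it to a condition $q_s$ choosing a specific node $b_s$ on some level $\gamma_\delta \ge \sup_{\alpha<\delta} \gamma_\alpha$ with $q_s \Vdash b_s \in \dot{b}$; because $q_s$ forces each $b_{s \upharpoonright \alpha}$ onto $\dot{b}$ and $\dot{b}$ is a chain, $b_s$ automatically extends each $b_{s \upharpoonright \alpha}$ in $T$.

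At the top stage $\kappa$, the ordinal $\gamma := \gamma_\kappa$ is still strictly below $\lambda$ by regularity of $\lambda$ together with $\kappa < \lambda$. For each $\sigma \in 2^\kappa$ the condition $q_\sigma$ forces the node $b_\sigma$ at level $\gamma$ to lie on $\dot{b}$. Given distinct $\sigma, \sigma' \in 2^\kappa$, let $\alpha$ be minimal where they differ: then $b_\sigma$ extends $b_{\sigma \upharpoonright (\alpha+1)}$ while $b_{\sigma'}$ extends the distinct node $b_{\sigma' \upharpoonright (\alpha+1)}$ at level $\gamma_{\alpha+1}$, so $b_\sigma \ne b_{\sigma'}$ as nodes of $T$. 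This yields $2^\kappa$ pairwise distinct nodes on level $\gamma$; but $T$ is a $\lambda$-tree, so that level has strictly fewer than $\lambda \le 2^\kappa$ many nodes, a contradiction.

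The step I expect to be most delicate is the limit stage: we need the construction to proceed through $\kappa$-many limits \emph{and} to preserve genuine splitting at each limit, which is why $\gamma_\delta$ is chosen at or above $\sup_{\alpha<\delta} \gamma_\alpha$ and $q_s$ is made to pick a specific node $b_s$ at that level. The $\kappa^+$-closure of $Q$ is precisely what keeps the construction alive through all such limits; the $\kappa^+$-cc of $P$ plays no role in the construction itself, being used only to guarantee that in $V[P]$ the object $T$ is still a tree with levels of size strictly less than $\lambda$.
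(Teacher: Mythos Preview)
Your argument has a genuine gap at the limit stages. You run the construction in $V[P]$ and invoke the $\kappa^+$-closure of $Q$ to find lower bounds for the descending sequences $\langle q_{s \upharpoonright \alpha} : \alpha < \delta \rangle$. But $Q$ is only assumed to be $\kappa^+$-closed in $V$; after forcing with the $\kappa^+$-cc poset $P$, new $\le\kappa$-sequences of elements of $Q$ can appear that have no lower bound. (Easton's lemma gives only $\kappa^+$-distributivity of $Q$ in $V[P]$, which is strictly weaker than closure and does not support a tree-of-conditions construction.) Your sequences are genuinely new: the choice of $q_{s \conc 0}, q_{s \conc 1}$ depends on the tree $T$ and on the $Q$-name $\dot{b}$, both of which live only in $V[P]$, so the resulting sequence $\langle q_{s \upharpoonright \alpha} : \alpha < \delta\rangle$ is a $V[P]$-object and need not lie in $V$. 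Your final remark that ``the $\kappa^+$-cc of $P$ plays no role in the construction itself'' is the telltale sign that something is off: the hypothesis ``$\dot{T}$ is a $P$-name for a $\lambda$-tree'' already guarantees small levels in $V[P]$, so on your reading the chain condition on $P$ would be entirely superfluous.

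The paper does not prove this fact but cites Unger; in his argument the $\kappa^+$-cc of $P$ is the heart of the matter, not a side remark. One builds the system $\{q_s : s \in 2^{\le\kappa}\}$ entirely in $V$, working with the $P$-name $\dot{T}$ and a $P\times Q$-name for $\dot{b}$. At each splitting step one finds, in $V$, extensions $q_{s\conc 0}, q_{s\conc 1} \le q_s$ together with a maximal antichain $A_s \subseteq P$ of size $\le\kappa$ such that every $p \in A_s$ decides incompatible nodes of $\dot{T}$ for $(p,q_{s\conc 0})$ and $(p,q_{s\conc 1})$; the relevant level is a supremum over $A_s$ and stays below $\lambda$ because $|A_s|\le\kappa<\lambda$ and $\lambda$ is regular. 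Since the entire tree of $q_s$'s is constructed in $V$, the $\kappa^+$-closure of $Q$ legitimately applies at every limit. Only at the very end does one pass to $V[P]$, using the generic to select one $p \in A_s \cap G$ for each $s$ and read off $2^\kappa$ pairwise distinct nodes on a single level $\gamma<\lambda\le 2^\kappa$.
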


\subsection{Some pcf notions}

Let us briefly review pcf concepts which appear in the arguments below. Suppose $A$ is a set of cardinals, typically with $|A| < \mx{min}(A)$, and $J$ is an \emph{ideal} on $A$. We denote by $\prod A$ the collection of all functions $f: A \to \bigcup A$ such that for every $\alpha \in A$, $f(\alpha) \in \alpha$. For $f,g \in \prod A$, we write \beq f <_J g  \iff \set{\alpha \in A}{f(\alpha) \ge g(\alpha)} \in J.\eeq The relation $\le_J$ and $=_J$ is defined similarly (i.e.\ the set of counterexamples is in $J$). Notice that \beq \label{eq:<} (f <_J g \vee f =_J g) \then f \le_J g,\eeq but the converse is in general true only if $J$ is a prime ideal (it may happen that $f \le_J g$ and the sets $B_1 = \set{\alpha \in A}{f(\alpha) = g(\alpha)}$ and $B_2 = \set{\alpha \in A}{f(\alpha) \neq g(\alpha)}$ are both $J$-positive and disjoint). Some weaker relationships between $<_J$ and $\le_J$ are useful, in particular this one: \beq \label{eq:weak} (f <_J g \et g \le_J h) \then f <_J h.\eeq

\begin{definition}
We say that the ordered set $(\prod A, <_J)$ has \emph{cofinality} $\kappa$, and write $\cf(\prod A, <_J) = \kappa$, if $\kappa$ is the least cardinal such that there exists $X \sub \prod A$ of size $\kappa$ which is cofinal in $\prod A$ under $<_J$, i.e.\ for every $f \in \prod A$ there is $g \in X$ with $f <_J g$ or $f = g$.\footnote{This is strictly stronger than requiring $\le_J$ by (\ref{eq:<}). Note that if $\prod A$ has no maximal elements, then the clause $f = g$ can be omitted.}
\end{definition}

Notice that it is not required that the members of the cofinal set $X$ are themselves ordered by $<_J$ (this will lead to the notion of \emph{true cofinality} introduced below).

We say that $(\prod A, <_J)$ is $\kappa$-\emph{directed (closed)} if every subset $X$ of $\prod A$ of size $<\kappa$ has a $<_J$-upper bound.

\begin{definition}
We say that $(\prod A, <_J)$ has \emph{true cofinality} $\kappa$, and write $\tcf(\prod A, <_J) = \kappa$, if there is a $<_J$-increasing sequence $\seq{f_i}{i<\kappa}$ such that for every $g \in \prod A$ there is some $i$ with $g <_J f_i$, and $\kappa$ is least such cardinal.
\end{definition}

It is clear that any $\seq{f_i}{i<\kappa}$ which is a witness for the true cofinality is also a cofinal subset of $(\prod A,<_J)$ and therefore $\cf(\prod A, <_J) \le \tcf(\prod A, <_J)$. It also holds that if $\tcf(\prod A, <_J) = \kappa$, then $(\prod A,<_J)$ is $\kappa$-directed closed.

\subsection{Prikry and Magidor forcing}\label{sec:Prk}

For completeness, let us review the definitions and basic properties of forcings which singularize a large cardinal $\kappa$ by adding a cofinal sequence of order type $\alpha<\kappa$.

Recall that $\Q$ is \emph{$\kappa$-centered} for a regular $\kappa$ if $\Q$ can be written as the union of the family $\set{\Q_\alpha \sub \Q}{\alpha <\kappa}$ such that for every $\alpha<\kappa$ and every $p,q \in \Q_\alpha$ there exists $r \in \Q_\alpha$ with $r \le p,q$.

\begin{definition}\label{def:Prikry}
Let $\kappa$ be a measurable cardinal and $U$ a normal ultrafilter on $\kappa$. Prikry forcing $\Q_U$ is composed of pairs $(s,A)$, where $s$ is a finite set of ordinals below $\kappa$, $A$ is in $U$ and $\mx{max}(s) + 1 \cap A = \emptyset$. $(t,B) \le (s,A)$ iff $t$ is an end-extension of $s$ (which we write $s \sqsubseteq t$), $B \sub A$, and $t \setminus s$ is included in $A$. We call $s$ \emph{a stem}.
\end{definition}

$\Q_U$ adds an $\omega$-sequence which is cofinal in $\kappa$ without collapsing any cardinals. It is easy to see that all conditions with the same stem are compatible. This implies that $\Q_U$ is $\kappa^+$-cc, in fact $\kappa$-centered. 

Magidor \cite{M:cof} formulated a generalization of the Prikry forcing which adds a cofinal sequence to $\kappa$ of cofinality $\omega < \mu<\kappa$ without collapsing any cardinals. The idea is to use $\mu$-many normal measures $\vec{U} = \seq{U_\gamma}{\gamma < \mu}$ on $\kappa$ to add $\mu$-many $\omega$-Prikry sequences which together make a cofinal subset of $\kappa$ of ordertype $\mu$; to avoid unwanted interference between the Prikry sequences, the sequence $\vec{U}$ is coherent in a certain sense, more precisely, for each $\gamma < \mu$, $\seq{U_\beta}{\beta<\gamma}$ is an element of the ultrapower of $V$ via $U_\gamma$, denoted $\Ult(V,U_\gamma)$. We say that $\vec{U}$ is a Mitchell sequence of length $\mu$. The following easy lemma will be useful for us.

\begin{lemma}\label{lm:vec-U}
Assume $\vec{U}$ is a Mitchell sequence on $\kappa$ of length $\mu<\kappa$ and $\Q$ is a $\kappa^+$-distributive forcing notion. Then $\vec{U}$ is a Mitchell sequence on $\kappa$ of length $\mu$ in $V[\Q]$.
\end{lemma}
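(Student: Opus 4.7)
The plan has two parts: verify that each $U_\gamma$ is still a normal measure on $\kappa$ in $V[\Q]$, and then verify the Mitchell-coherence condition $\seq{U_\beta}{\beta<\gamma} \in \Ult(V[\Q],U_\gamma)$ for every $\gamma<\mu$.

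For the first part I would rely on the standard fact that $\kappa^+$-distributivity adds no new $\le\kappa$-sequences of ground-model objects. In particular, $\power{\kappa}^V = \power{\kappa}^{V[\Q]}$, so each $U_\gamma$ remains an ultrafilter on $\kappa$. Any $<\kappa$-sequence of sets in $U_\gamma$, and any regressive function on a set in $U_\gamma$, has length $\le\kappa$ and values in $V$, hence lies in $V$ by $\kappa^+$-distributivity; applying $\kappa$-completeness and normality of $U_\gamma$ in $V$ then settles these properties in $V[\Q]$.

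The main point is the coherence. Let $M_\gamma=\Ult(V,U_\gamma)$ and $N_\gamma=\Ult(V[\Q],U_\gamma)$. The key observation I would establish is that the $V$-side ultrapower $V^\kappa/U_\gamma$ sits as a \emph{transitive} substructure of $V[\Q]^\kappa/U_\gamma$. Concretely, suppose $f\in V$ with $f\colon\kappa\to V$ and $g\in V[\Q]$ with $[g]\in_{U_\gamma}[f]$. Then $\{\alpha<\kappa : g(\alpha)\in f(\alpha)\}\in U_\gamma$, and after modifying $g$ on a $U_\gamma$-null set we may assume $g\colon\kappa\to V$. A standard density argument (for each $\alpha<\kappa$ the set of conditions deciding $\dot g(\alpha)$ is dense open in $\Q$, and $\kappa^+$-distributivity makes the intersection of these $\kappa$-many dense opens dense) places $g$ in $V$. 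Hence the Mostowski collapse of $N_\gamma$ restricted to the image of $V^\kappa/U_\gamma$ coincides with the Mostowski collapse of $M_\gamma$; equivalently, the natural map $[f]_V\mapsto [f]_{V[\Q]}$ is a transitive embedding $M_\gamma\hookrightarrow N_\gamma$ that fixes every $[f]$ with $f\in V$.

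Now the hypothesis that $\vec U$ is a Mitchell sequence in $V$ yields, for each $\gamma<\mu$, a function $f\in V$ whose Mostowski-collapsed image in $M_\gamma$ is $\seq{U_\beta}{\beta<\gamma}$. Because the ordinals $\gamma$ and $\kappa$ and the measures $U_\beta\subseteq\power{\kappa}$ are all absolute between $V$ and $V[\Q]$ (the latter since $\power{\kappa}$ is preserved), the transitive agreement of the collapses gives $\seq{U_\beta}{\beta<\gamma}\in N_\gamma$, as required. The only step that needs genuine care is the transitivity of $V^\kappa/U_\gamma$ inside $V[\Q]^\kappa/U_\gamma$; once this is in hand, both the preservation of normality and the preservation of coherence reduce to reading off the corresponding $V$-witnesses.
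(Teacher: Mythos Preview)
Your proof is correct and follows essentially the same approach as the paper. The paper phrases the key step as showing that the two ultrapowers $\Ult(V,U_\gamma)$ and $\Ult(V[\Q],U_\gamma)$ agree on all sets of rank below $\nu=j_{U_\gamma}(\kappa)$ (since such sets are represented by functions $f:\kappa\to V_\kappa$, which $\Q$ does not add by $\kappa^+$-distributivity) and then observes that $\seq{U_\beta}{\beta<\gamma}$ has rank below $\nu$; your transitive-substructure formulation of $M_\gamma\hookrightarrow N_\gamma$ is the same argument in slightly different language.
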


\begin{proof}
Notice that by the $\kappa^+$-distributivity of $\Q$, every normal measure $U$ on $\kappa$ stays a normal measure in $V[\Q]$. Moreover, if $(j_U)^V: V \to \Ult(V,U)$ and $(j_U)^{V[\Q]}: V[\Q] \to \Ult(V[\Q],U)$ are the canonical elementary embeddings to the respective ultrapowers, then $(j_U)^V(\kappa) = (j_U)^{V[\Q]}(\kappa) = \nu$ and $\Ult(V,U)_\nu = \Ult(V[\Q],U)_\nu$, i.e.\ the ultrapowers agree on the sets in the $V$-hierarchy up to $\nu$. This follows from the fact that all sets of rank less than $\nu$ are expressible as (the transitive collapse) of an equivalence class of some $f: \kappa \to V_\kappa$, and $\Q$ does not add such functions. This implies that for every $\gamma <\mu$, $\seq{U_\beta}{\beta < \gamma}$ -- a set of rank less than $(j_{U_\gamma})^{V[\Q]}(\kappa)$ -- is an element of $\Ult(V[\Q],U_\gamma)$, and hence $\vec{U}$ is a Mitchell sequence in $V[\Q]$.
\end{proof}

Magidor forcing, which we denote $\Qm$, has a more complicated definition (see \cite{M:cof} for details), so let us just review the basic points which are relevant for us.

Conditions in $\Qm$ are pairs $(g,G)$ where $g$ is an increasing function from a finite subset of $\mu$ to $\kappa$ whose range concentrates on (specially chosen) inaccessible cardinals (we call $g$ a stem). $G$ is a set of measure-one constraints. An extension $(g',G')$ of $(g,G)$ is allowed to have new elements in $\dom{g'}$ which lie in-between the elements in $\dom{g}$; for this reason, the constraints in $G$ refer not only to normal measures on $\kappa$, but also to normal measures on cardinals below $\kappa$ (defined using the coherence of $\vec{U}$).  Conditions with the same stems are compatible which implies that $\Qm$ is $\kappa^+$-cc, in fact $\kappa$-centered.

\section{A review of the arguments for a small $\uu$}\label{sec:review}

We will use the arguments in papers \cite{GS:pol,GS:small} to get a small $\uu$, and modify the relevant forcings to yield a model with compactness at $\kappa^{++}$. In \cite{GS:pol}, Theorem 1, and Claim 3.3, a forcing is constructed which yields the following model $V^*$ (we are changing the original notation to fit our notation):

\begin{theorem}[\cite{GS:pol}]
Assume there is a supercompact cardinal $\kappa$ in the ground model and $\delta \ge \kappa^{++}$ is cardinal such that $\kappa < \cf(\delta) \le \delta$. Then in a forcing extension $V^*$, $2^\kappa = \delta$ and $\kappa$ is a singular strong limit cardinal with countable cofinality which is a limit of measurables $\seq{\kappa_i}{i<\omega}$ such that $2^{\kappa_i} = \kappa_i^+$ for every $i<\omega$ and both products \beq \label{eq:prod} \prod_{i<\omega}\kappa_i/\Jbdpar{\omega} \mbox{ and }\prod_{i<\omega}\kappa^+_i/\Jbdpar{\omega} \eeq are $\cf(\delta)$-directed closed, where $\Jbdpar{\mu}$ is the ideal of bounded subsets of a regular cardinal $\mu$.
\end{theorem}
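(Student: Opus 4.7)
The plan is to execute a three-stage construction. Starting from $\ZFC+\GCH$ with $\kappa$ supercompact, first apply a Laver-style preparation so that the supercompactness of $\kappa$ becomes indestructible under $\kappa$-directed closed forcing, while keeping $\GCH$ intact at all measurable cardinals below $\kappa$ and on $[\kappa,\delta]$. Second, force with a $\kappa$-directed closed poset such as $\Add(\kappa,\delta)$ to achieve $2^\kappa = \delta$; this preserves the supercompactness of $\kappa$ and leaves $2^\nu = \nu^+$ at every measurable $\nu<\kappa$, since no bounded subsets of $\kappa$ are added. Third, fix a normal measure $U$ on $\kappa$ obtained as a projection of a $\delta$-supercompactness measure; by \L o\'s, $U$ concentrates on measurables $\nu$ with $2^\nu=\nu^+$. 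Then force with the Prikry forcing $\Q_U$ of Definition~\ref{def:Prikry} to singularize $\kappa$ to cofinality $\omega$, producing the generic sequence $\seq{\kappa_i}{i<\omega}$. The resulting model is $V^*$.

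The cardinal-arithmetic clauses are straightforward in $V^*$. Since $\Q_U$ is $\kappa^+$-cc and adds no bounded subsets of $\kappa$, and $\Add(\kappa,\delta)$ did not alter $2^\nu$ for $\nu<\kappa$, both $2^\kappa=\delta$ and $2^{\kappa_i}=\kappa_i^+$ persist into $V^*$. The $\kappa_i$'s remain measurable by standard Prikry preservation together with the fact that $U$ concentrates on measurables (cf.\ Lemma~\ref{lm:vec-U} in the one-measure setting). Cofinalities above $\kappa^+$ are preserved, so $\cf(\delta)$ is unaltered, and $\kappa$ is a strong limit singular of cofinality $\omega$ in $V^*$.

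The substantive content is the $\cf(\delta)$-directedness of the products in (\ref{eq:prod}). I would establish this by lifting a $\delta$-supercompactness embedding $j:V[\Add(\kappa,\delta)]\to M$ with $j(\kappa)>\delta$ and ${}^\delta M\sub M$ through $\Q_U$, using that $U$ is a projection of the supercompactness measure so that $\seq{\kappa_i}{i<\omega}$ can be realized as (part of) a condition in $j(\Q_U)$ by a standard master-condition / term-forcing argument, obtaining $j^*:V^*\to M^*$. Given $\mc F \sub \prod_{i<\omega}\kappa_i$ with $|\mc F|=\theta<\cf(\delta)$, the closure of $M^*$ ensures $j^*[\mc F]\in M^*$; forming suprema coordinatewise inside $M^*$ and pulling back via elementarity yields a function that dominates every $f\in \mc F$ on a tail, hence an upper bound in $\prod \kappa_i/\Jbdpar{\omega}$. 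The parallel argument handles $\prod\kappa_i^+/\Jbdpar{\omega}$, using that $M^*$ correctly computes each $\kappa_i^+$.

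The main obstacle is exactly the orchestration of this lift through $\Add(\kappa,\delta)*\dot\Q_U$: one must arrange $j^*(\kappa)>\delta$, sufficient closure of $M^*$, and the existence of the relevant master conditions in $j(\Add(\kappa,\delta))$ all at once. This is the technical core of the Garti--Shelah construction, and the reason the measure $U$ must be chosen as a projection of a supercompact measure rather than an arbitrary normal measure on $\kappa$.
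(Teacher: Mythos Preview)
Your construction diverges from Garti--Shelah at the second stage: they do not use $\Add(\kappa,\delta)$, but rather a $<\kappa$-support iteration $\P_\delta$ of $\bar\theta$-\emph{dominating} forcings $\Q_{\bar\theta}$. Each $\Q_{\bar\theta}$ is $\kappa^+$-Knaster, $<\kappa$-strategically closed, and adds a single function $g_{\bar\theta}$ that $<_{\Jbd}$-dominates all of $(\prod_{\alpha<\kappa}\theta_\alpha)^V$. Iterating so that each relevant $\bar\theta$ occurs cofinally below $\delta$ yields a $<_{\Jbd}$-increasing cofinal family of length $\cf(\delta)$, so that $\tcf(\prod_{\alpha<\kappa}\theta_\alpha,<_{\Jbd})=\cf(\delta)$ holds already in $V[\P_\delta]$, \emph{before} any Prikry forcing. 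A short pcf argument then transfers this to the $\omega$-indexed products (\ref{eq:prod}) after $\Q_U$. No supercompactness embedding is ever lifted through $\Q_U$; supercompactness is invoked only to keep $\kappa$ measurable after $\P_\delta$ (via the Laver preparation). So what you call ``the technical core of the Garti--Shelah construction'' is not what they actually do.

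Your proposed alternative has a genuine gap. The lift $j^*:V^*\to M^*$ through $\Q_U$ cannot be obtained by a master-condition argument: conditions in $j(\Q_U)$ have \emph{finite} stems, so the infinite Prikry sequence $\seq{\kappa_i}{i<\omega}$ cannot be ``realized as part of a condition''. More decisively, no elementary $j^*\supseteq j$ sending the $\Q_U$-generic to a $j(\Q_U)$-generic can exist at all: since each $\kappa_i<\kappa=\mathrm{crit}(j)$, the Prikry sequence $s$ satisfies $j^*(s)(i)=j^*(\kappa_i)=\kappa_i$, hence $j^*(s)=s$; yet by elementarity $j^*(s)$ must be cofinal in $j^*(\kappa)=j(\kappa)>\kappa$, a contradiction. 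Even setting this aside, your bounding step is unexplained in the only nontrivial case $\kappa\le\theta<\cf(\delta)$: coordinatewise suprema fail since $\theta>\kappa_i$ for every $i$, and you give no mechanism by which ``pulling back via elementarity'' produces an element of $\prod_{i<\omega}\kappa_i$. The dominating iteration is precisely the device that supplies these bounds; Cohen forcing does not.
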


Let us review, and at times attempt to clarify, the basic steps of the proof (we will also state a stronger formulation of the result using true cofinalities which is implicit in \cite{GS:pol}). 

Let $V =  V'[\P^L]$ be a model where a supercompact cardinal $\kappa$ is made indestructible by a suitable Laver-like preparation $\P^L$ over $V'$. We require that \begin{equation} \label{PL} \mbox{$\P^L$ applies to the forcing $\P_\delta$ introduced below,}\end{equation} so that $\kappa$ is still supercompact in $V[\P_\delta]$ (see \cite[Claim 2.2]{GS:pol} for more details). Note that we can assume that $2^\kappa = \kappa^+$ in $V$ (by collapsing $2^\kappa$ to $\kappa^+$ over $V'[\P^L]$ by a $\kappa$-directed closed forcing if necessary) so that by reflection there is an increasing cofinal sequence of measurables $\bar{\kappa} = \seq{\kappa_\alpha}{\alpha<\kappa}$ converging to $\kappa$, with $2^{\kappa_\alpha} = \kappa_\alpha^+$ for each $\alpha <\kappa$ (see \cite[Claim 3.3]{GS:pol} for more details).

Let us work in this $V$. A \emph{$\bar{\theta}$-dominating forcing} is defined in \cite[Definition 2.3]{GS:pol}, which is denoted $\Qt$. Given a supercompact cardinal $\kappa$ and a sequence (with some closure properties) $\bar{\theta}$ of length $\kappa$ of regular cardinals converging to $\kappa$, $\Qt$ has the following properties:
\medskip
\bce[(i)]
\item $\Qt$ is $\kappa$-2-linked (by definition this implies $\kappa^+$-Knaster).
\item $\Qt$ is $<\kappa$-strategically closed.
\item In $V[\Qt]$, $(\prod_{\alpha<\kappa}\theta_\alpha/\Jbd)^V$ is dominated by the generic function $g_{\bar{\theta}}$.
\ece
\medskip
In order to secure the claim in \cite[Lemma 2.11]{GS:pol} which requires cofinality with respect to $<_{\Jbd}$, (iii) should mean that for every $ f\in \prod_{\alpha<\kappa}\theta_\alpha$, $f <_{\Jbd} g_{\bar{\theta}}$ in $V[\Qt]$. This is not stated clearly in \cite[Lemma 2.11]{GS:pol} (the ordering $\le_{\Jbd}$ is mentioned instead), so let us state it explicitly here:

{~}
\begin{lemma}\label{lm:correction1}
For every $ f\in (\prod_{\alpha<\kappa}\theta_\alpha)^V$, $f <_{\Jbd} g_{\bar{\theta}}$ in $V[\Qt]$, where $g_{\bar{\theta}}$ is the generic function added by $\Qt$.
\end{lemma}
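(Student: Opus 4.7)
The plan is to deduce the strict inequality $f <_{\Jbd} g_{\bar{\theta}}$ from the non-strict inequality $f \le_{\Jbd} g_{\bar{\theta}}$ that is already established in \cite[Lemma 2.11]{GS:pol} by means of a simple shift-by-one trick. This avoids having to reopen the concrete definition of $\Qt$ and its density arguments, and relies only on the statement of the original lemma in its weaker form.

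Given $f \in (\prod_{\alpha<\kappa}\theta_\alpha)^V$, I would first define $f^+ \in V$ by $f^+(\alpha) = f(\alpha)+1$ for all $\alpha<\kappa$. Each $\theta_\alpha$ is a regular cardinal, hence a limit ordinal, so from $f(\alpha) \in \theta_\alpha$ we get $f(\alpha)+1 \in \theta_\alpha$; thus $f^+ \in (\prod_{\alpha<\kappa}\theta_\alpha)^V$ as well. Applying the stated $\le_{\Jbd}$-form of \cite[Lemma 2.11]{GS:pol} to $f^+$ then yields, in $V[\Qt]$, that $f^+ \le_{\Jbd} g_{\bar{\theta}}$; equivalently, the set
\[
B = \{\alpha<\kappa \,:\, f(\alpha)+1 > g_{\bar{\theta}}(\alpha)\}
\]
belongs to $\Jbd$ and is therefore bounded, say $B\subseteq\gamma$ for some $\gamma<\kappa$.

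For every $\alpha\in[\gamma,\kappa)$ we then have $g_{\bar{\theta}}(\alpha) \ge f(\alpha)+1 > f(\alpha)$, so $\{\alpha<\kappa : g_{\bar{\theta}}(\alpha) \le f(\alpha)\} \subseteq \gamma \in \Jbd$, which is exactly $f <_{\Jbd} g_{\bar{\theta}}$ in $V[\Qt]$, as required.

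There is no genuine obstacle in this proof: the whole point of passing from $f$ to $f^+$ is that a non-strict domination applied to $f^+$ translates into a strict domination when read back on $f$. The only small points that need care are the closure of $\theta_\alpha$ under successor (immediate from regularity) and the fact that $f^+$ remains a ground-model function (immediate from its definition), both of which are needed in order to apply \cite[Lemma 2.11]{GS:pol} to $f^+$.
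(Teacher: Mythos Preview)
Your argument is correct and takes a genuinely different route from the paper. The paper proves the lemma by a direct density argument inside $\Qt$: for each $f$ it exhibits the dense set
\[
D_f = \set{(\eta,f') \in \Qt}{(\forall \varepsilon \in [\lh(\eta),\kappa)) \; f(\varepsilon) < f'(\varepsilon)}
\]
and concludes that $g_{\bar{\theta}}$ strictly dominates $f$ on a tail. This requires opening up the definition of $\Qt$. Your approach is entirely black-box: you accept the weak $\le_{\Jbd}$-domination as given and upgrade it to $<_{\Jbd}$ via the shift $f \mapsto f^+$. This is elegant and more general---it shows that for \emph{any} forcing adding a function into $\prod_{\alpha<\kappa}\theta_\alpha$ with each $\theta_\alpha$ a limit ordinal, $\le_{\Jbd}$-domination of the ground model and $<_{\Jbd}$-domination are equivalent. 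The paper's argument, in exchange, is self-contained and does not depend on the $\le_{\Jbd}$ statement already being available in the literature.

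One minor citation point: the weak inequality you invoke is really property~(iii) of $\Qt$ as recorded in the present paper, not \cite[Lemma~2.11]{GS:pol} itself, which concerns the full iteration $\P_\delta$ rather than a single step. The paper's comment that ``the ordering $\le_{\Jbd}$ is mentioned instead'' refers to how (iii) is phrased and used inside the proof of that lemma. This does not affect the mathematics of your argument, only the pointer.
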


\begin{proof}
For each $f \in (\prod_{\alpha<\kappa}\theta_\alpha)^V$, let \beq D_f = \set{(\eta,f') \in \Qt}{(\forall \varepsilon \in [\lh(\eta),\kappa)) \; f(\varepsilon) < f'(\varepsilon)},\eeq where the notation is as in \cite{GS:pol}. It is easy to see that $D_f$ is dense. It follows $g_{\bar{\theta}}$ is strictly greater on an end segment of $\kappa$, and so $f <_{\Jbd} g_{\bar{\theta}}$ in $V[\Qt]$.
\end{proof}

In \cite[Definition 2.6]{GS:pol}, $\Qt$ is iterated for length $\delta$ with $<\kappa$-support; the iteration is denoted $\P_\delta$. Some bookkeeping mechanism (or lottery) is used to choose the $\bar{\theta}$'s so that each possible $\bar{\theta}$ appears at some stage of the iteration.\footnote{In fact, only two specific sequences in $V$ are actually required for the main theorem (the sequence of measurables $\bar{\kappa}$ satisfying $\GCH$ and its successors; see below).} \cite[Lemma 2.9]{GS:pol} shows that $\P_\delta$ is $\kappa$-2-linked, and consequently $\kappa^+$-Knaster. The key Lemma 2.11 claims that for a fixed $\bar{\theta}$, the cofinality of $(\prod_{\alpha<\kappa}\theta_\alpha,<_{\Jbd})^{V[\P_\delta]}$ is $\cf(\delta)$ in $V[\P_\delta]$; this claim is proved by showing that the generic functions $\seq{g_{\bar{\theta}_\alpha}}{\alpha \in I}$ for some cofinal set $I$ in $\delta$ of order-type $\cf(\delta)$ are $\le_{\Jbd}$-increasing and bound functions added previously.\footnote{The proof of this abuses the notation and treats $\P_\delta$ as defined with respect to this single $\bar{\theta}$; to handle all the relevant $\bar{\theta}$'s, we need to ensure that each $\bar{\theta}$ appears cofinally often below $\delta$ in $\P_\delta$.} This can be stated more strongly as follows:

\begin{lemma}\label{lm:correction2}
The sequence $\seq{g_{\bar{\theta}_\alpha}}{\alpha \in I}$ witnesses in $V[\P_\delta]$ the following: \beq \label{eq:cor2} \tcf (\prod_{\alpha<\kappa}\theta_\alpha,<_{\Jbd})^{V[\P_\delta]} = \cf(\delta).\eeq 
\end{lemma}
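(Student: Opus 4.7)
The plan is to reprove \cite[Lemma 2.11]{GS:pol} line by line but with the strict order $<_{\Jbd}$ in place of $\le_{\Jbd}$, inserting Lemma \ref{lm:correction1} at the key step. Three things need to be checked about the sequence $\bar g = \seq{g_{\bar{\theta}_\alpha}}{\alpha \in I}$: (a) that it is $<_{\Jbd}$-increasing; (b) that it is cofinal in $(\prod_{\alpha<\kappa}\theta_\alpha,<_{\Jbd})^{V[\P_\delta]}$; and (c) that its length $\cf(\delta)$ is least possible.

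For (a), fix $\alpha<\beta$ in $I$. The function $g_{\bar\theta_\alpha}$ was added at stage $\alpha$ and so lies in the intermediate model $V[\P_\beta]$. At stage $\beta$ (where, by the choice of $I$, the bookkeeping calls for our fixed $\bar\theta$) we force with $\Q_{\bar\theta}$ over $V[\P_\beta]$. The proof of Lemma \ref{lm:correction1} is a density argument that depends only on the definition of $\Q_{\bar\theta}$ and on $\bar\theta$ being a suitable sequence of regular cardinals converging to $\kappa$; these features are preserved in $V[\P_\beta]$ thanks to the $<\kappa$-strategic closure and the $\kappa^+$-Knaster property of the iteration so far. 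Applying Lemma \ref{lm:correction1} in $V[\P_\beta]$ with $f = g_{\bar\theta_\alpha}$ gives $g_{\bar\theta_\alpha}<_{\Jbd}g_{\bar\theta_\beta}$ in $V[\P_{\beta+1}]$, and hence in $V[\P_\delta]$.

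For (b), let $f \in (\prod_{\alpha<\kappa}\theta_\alpha)^{V[\P_\delta]}$. Since $\P_\delta$ is $\kappa^+$-cc (in fact $\kappa^+$-Knaster by \cite[Lemma 2.9]{GS:pol}) and $\cf(\delta)>\kappa$, every function with domain of size $\kappa$ in $V[\P_\delta]$ already lies in some $V[\P_\gamma]$ with $\gamma<\delta$. Picking $\beta \in I$ with $\beta \ge \gamma$ and invoking Lemma \ref{lm:correction1} in $V[\P_\beta]$ as in (a) yields $f <_{\Jbd} g_{\bar\theta_\beta}$. For (c), \cite[Lemma 2.11]{GS:pol} already establishes $\cf(\prod_{\alpha<\kappa}\theta_\alpha,<_{\Jbd})^{V[\P_\delta]}=\cf(\delta)$, and since $\cf\le\tcf$ in any partial order, we conclude $\tcf = \cf(\delta)$, witnessed by $\bar g$ from above.

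The main delicate point --- the transfer of Lemma \ref{lm:correction1} from $V$ to the intermediate model $V[\P_\beta]$ --- is essentially the same verification that underlies the $\le_{\Jbd}$-version in \cite[Lemma 2.11]{GS:pol}. Once this verification is in place, the upgrade from $\le_{\Jbd}$ to $<_{\Jbd}$ is automatic, because the density argument of Lemma \ref{lm:correction1} (via the dense sets $D_f$ defined in its proof) delivers strictness directly. In short, no new conceptual work is required beyond pointing out that the strict form of the density argument was available all along.
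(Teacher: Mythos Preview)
Your proof is correct and follows essentially the same approach as the paper: both observe that the density argument of Lemma \ref{lm:correction1} applies at each intermediate stage $V[\P_\beta]$, so the $\le_{\Jbd}$-increasing cofinal sequence already produced in \cite[Lemma 2.11]{GS:pol} is in fact $<_{\Jbd}$-increasing and $<_{\Jbd}$-cofinal. You spell out more of the details (the $\kappa^+$-cc bounding argument for (b), the minimality clause (c)) that the paper leaves implicit in its one-sentence appeal to Lemma \ref{lm:correction1}, but the conceptual content is identical.
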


\begin{proof}
By Lemma \ref{lm:correction1}, the family $\seq{g_{\bar{\theta}_\alpha}}{\alpha \in I}$ is actually $<_{\Jbd}$-increasing and for every $f \in (\prod_{\alpha < \kappa}\theta_\alpha)^{V[\P_\delta]}$ in $V[\P_\delta]$ there is some $\alpha$ with $f <_{\Jbd} g_{\bar{\theta}_\alpha}$.
\end{proof}

\cite[Lemma 3.1 and Main Claim 3.3]{GS:pol} argue that this configuration is preserved under Prikry forcing $\Q_U$. Let us give more details:

\begin{lemma}\label{lm:correction3}
Suppose $\bar{\kappa} = \seq{\kappa_\alpha}{\alpha < \kappa}$ is a sequence of measurable cardinals converging to $\kappa$ such that $\GCH$ holds at the $\kappa_\alpha$'s. Let $\bar{\kappa}^+$ denote the sequence of the successors of the $\kappa_\alpha$'s. Then:
\bce[(i)]
\item In $V[\P_\delta]$, \beq \tcf (\prod_{\alpha<\kappa}\kappa_\alpha, <_{\Jbd})= \tcf(\prod_{\alpha<\kappa}\kappa^+_\alpha,<_{\Jbd}) = \cf(\delta).\eeq
\item In $V[\P_\delta]$, let $\Q_U$ denote the Prikry forcing defined with respect to some normal measure $U$ and let $\seq{\alpha_n}{n <\omega}$ denote a Prikry sequence which is included in $\bar{\kappa}$. Then the following hold:
\begin{multline} \label{m:1} \cf(\delta) = \tcf(\prod_{\alpha<\kappa}\kappa_\alpha,<_{\Jbd})^{V[\P_\delta]} = \tcf(\prod_{\alpha<\kappa}\kappa^+_\alpha,<_{\Jbd})^{V[\P_\delta]} = \\ \tcf(\prod_{\alpha<\kappa}\kappa_\alpha,<_{U^*})^{V[\P_\delta]} = \tcf(\prod_{\alpha<\kappa}\kappa^+_\alpha,<_{U^*})^{V[\P_\delta]} = \\ \tcf(\prod_{n<\omega}\alpha_n,<_{E^*})^{V[\P_\delta *\Q_U]} = \tcf(\prod_{n<\omega}\alpha_n^+,<_{E^*})^{V[\P_\delta *\Q_U]},\end{multline}
where $E$ is an ultrafilter extending $\Jbdpar{\omega}$, and $E^*$ its dual (and $U^*$ is the dual of $U$).
\ece
\end{lemma}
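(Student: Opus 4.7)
The plan divides into parts (i) and (ii); the latter further splits into the passage from $\Jbd$ to $U^*$ and the Prikry transfer.

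\textbf{Part (i).} Both $\bar{\kappa}$ and $\bar{\kappa}^+$ satisfy the closure hypotheses needed by $\Qt$, and by the bookkeeping of \cite[Def.~2.6]{GS:pol} (cf.\ the footnote thereafter) each appears cofinally often as the chosen $\bar\theta_\alpha$ along the iteration $\P_\delta$. Applying Lemma \ref{lm:correction2} once with $\bar\theta=\bar{\kappa}$ and once with $\bar\theta=\bar{\kappa}^+$ yields $<_{\Jbd}$-increasing cofinal families $\seq{g_\alpha}{\alpha\in I}$ and $\seq{g^+_\alpha}{\alpha\in I}$ of length $\cf(\delta)$ in the respective products, establishing (i).

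\textbf{Transfer $\Jbd \to U^*$.} Uniformity of $U$ gives $\Jbd\sub U^*$, so the two families above remain $<_{U^*}$-increasing and cofinal, bounding the respective true cofinalities above by $\cf(\delta)$. To prevent a drop, I argue by contradiction: suppose $\seq{h_\xi}{\xi<\nu}$ were $<_{U^*}$-cofinal with $\nu<\cf(\delta)$. Define $\xi:I\to\nu$ by $\xi(\alpha)=\min\set{\eta<\nu}{g_\alpha<_{U^*}h_\eta}$. Since $|I|=\cf(\delta)>\nu$ and $\cf(\delta)$ is regular, some value $\xi_0$ is attained on a cofinal $I_0\sub I$; monotonicity of $\seq{g_\alpha}$ and transitivity of $<_{U^*}$ then give $g_\alpha<_{U^*}h_{\xi_0}$ for \emph{all} $\alpha\in I$. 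But $\seq{g_\alpha}{\alpha\in I}$ is itself $<_{U^*}$-cofinal, so $h_{\xi_0}<_{U^*}g_\alpha$ for some $\alpha$; combining, $g_\alpha<_{U^*}g_\alpha$, contradicting properness of $U^*$. The same argument for $\bar{\kappa}^+$ gives the third and fourth equalities of (\ref{m:1}).

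\textbf{Prikry transfer.} By normality of $U$, the set $\set{\alpha<\kappa}{\alpha=\kappa_\alpha}$ is in $U$, so without loss of generality the Prikry sequence $\seq{\alpha_n}{n<\omega}$ consists of such fixed points; then $\phi(g)(n):=g(\alpha_n)$ lands in $\alpha_n=\kappa_{\alpha_n}$, defining $\phi:\prod_{\alpha<\kappa}\kappa_\alpha\to\prod_{n<\omega}\alpha_n$. I claim $\seq{\phi(g_\alpha)}{\alpha\in I}$ witnesses $\tcf(\prod_n\alpha_n,<_{E^*})^{V[\P_\delta*\Q_U]}=\cf(\delta)$. For strict $<_{E^*}$-increase: for $\alpha<\alpha'$ in $I$ the set $A=\set{\beta<\kappa}{g_\alpha(\beta)<g_{\alpha'}(\beta)}$ is in $U$, so the Mathias property of Prikry generics puts $\alpha_n\in A$ for cofinitely many $n$; cofinite sets belong to $E$. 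For cofinality, given $\tilde g\in(\prod_n\alpha_n)^{V[\P_\delta*\Q_U]}$ with $\Q_U$-name $\dot{\tilde g}$, the Prikry property lets us decide each $\dot{\tilde g}(n)$ by a pure extension, yielding a function of the relevant stem; normality of $U$ reduces this to a function of the stem's last coordinate, producing $h\in(\prod_\alpha\kappa_\alpha)^{V[\P_\delta]}$ with $\tilde g(n)\le h(\alpha_n)$ for all large $n$. Then $h<_{U^*}g_\alpha$ for some $\alpha\in I$ (by cofinality of $\seq{g_\alpha}$), hence $\tilde g<_{E^*}\phi(g_\alpha)$. Since $\P_\delta$ is $\kappa^+$-Knaster and $\Q_U$ is $\kappa^+$-cc, $\cf(\delta)$ remains regular, and the pigeonhole argument of the previous paragraph rules out a shorter $<_{E^*}$-cofinal sequence. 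Repeating the argument with $\bar{\kappa}^+$ in place of $\bar{\kappa}$ yields the final equality of (\ref{m:1}). The only nontrivial step---and the expected main obstacle---is the Prikry domination producing $h$, which is essentially the content of \cite[Main Claim~3.3]{GS:pol}.
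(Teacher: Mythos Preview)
Your proof is correct and follows the paper's approach: Lemma~\ref{lm:correction2} for (i), preservation of true cofinality under ideal extension for the $\Jbd \to U^*$ passage (the paper merely asserts this as a standard fact while you supply the pigeonhole argument), and the cited \cite{GS:pol} result for the Prikry transfer (the paper points to Lemma~3.1 there rather than Main Claim~3.3, but both are relevant). The paper's own proof is three sentences of citations; your expansion of each step is sound, modulo the harmless notational shortcut of assuming the Prikry points are fixed points of $\alpha\mapsto\kappa_\alpha$.
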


\begin{proof}
(i) follows directly from Lemma \ref{lm:correction2} in the present paper.

(ii) The first line in (\ref{m:1}) follows by Lemma \ref{lm:correction2} in the present paper, the second line by the fact that $U^*$ extends $\Jbd$ and true cofinalities are preserved when ideals are extended, and the last line follows by \cite[Lemma 3.1]{GS:pol}.
\end{proof}

The small value of $\uu$ is then obtained as a follow-up of \cite{GS:pol} in \cite{GS:small}, with \cite[Theorem 1.4]{GS:small} being the key ingredient. For concreteness let us assume that $\cf(\delta) = \kappa^+$. As we reviewed above, if $E$ is any ultrafilter extending the dual of $\Jbdpar{\omega}$ (and $E^*$ the dual prime ideal), we have in $V[\P_\delta * \Q_U]$: \beq \tcf(\prod_{n<\omega}\alpha_n,<_{E^*})^{V[\P_\delta *\Q_U]} = \tcf(\prod_{n<\omega}\alpha_n^+,<_{E^*})^{V[\P_\delta *\Q_U]} = \kappa^+.\eeq

Let us restate \cite[Theorem 1.4]{GS:small} with specific paramaters for our case:
\begin{theorem}[\cite{GS:small}]\label{GS:small:here}
Assume that:
\bce[(i)]
\item $\kappa$ is a singular strong limit cardinal with countable cofinality.
\item $E$ is a uniform ultrafilter on $\omega$ and $E^*$ its dual.
\item $\bar{\kappa} = \seq{\kappa_n}{n<\omega}$ is a sequence of regular cardinals converging to $\kappa$.
\item $U_n$ is a uniform ultrafiter on $\kappa_n$ for each $n<\omega$.
\item For every $n<\omega$ there is a $\sub^*$-decreasing sequence $\seq{A_{n,\alpha}}{\alpha<\theta_n}$ for some $\theta_n$ which generates $U_n$ (let $\bar{\theta} = \seq{\theta_n}{n<\omega}$).
\item $\chi_{\bar{\kappa}} = \tcf(\prod_{n<\omega}\kappa_n,<_{E^*})$, $\chi_{\bar{\theta}} = \tcf(\prod_{n<\omega}\theta_n,<_{E^*})$.
\ece
Then $\uu \le \chi_{\bar{\kappa}} \cdot \chi_{\bar{\theta}}$.
\end{theorem}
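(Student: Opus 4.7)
The plan is to realize $U$ as the $E$-sum of the $U_n$'s and to exhibit a base by combining the $\sub^*$-generators $A_{n,\alpha}$ of the $U_n$'s with witnesses to the true cofinalities $\chi_{\bar\kappa}$ and $\chi_{\bar\theta}$. Using $\sum_n \kappa_n = \kappa$, identify $\kappa$ with the disjoint union $\bigsqcup_n \kappa_n$ and regard any subset of $\kappa_n$ (such as $A_{n,\alpha}$ or $[f_\eta(n),\kappa_n)$) as a subset of $\kappa$ via the $n$-th component. Set
\beq
U = \set{X \sub \kappa}{\{n<\omega : X \cap \kappa_n \in U_n\} \in E}.
\eeq
Then $U$ is an ultrafilter because $E$ is one, and it is uniform since for $X \in U$ the set $\{n : X \cap \kappa_n \in U_n\}$ is infinite and each such $X \cap \kappa_n$ has cardinality $\kappa_n$, so $|X|=\kappa$.

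Fix $<_{E^*}$-increasing cofinal sequences $\seq{h_\xi}{\xi<\chi_{\bar\theta}}$ in $\prod_n \theta_n$ and $\seq{f_\eta}{\eta<\chi_{\bar\kappa}}$ in $\prod_n \kappa_n$ provided by (vi), and a base $\mc{B}_E$ for $E$ of size at most $2^{\aleph_0}$. For $(B,\xi,\eta) \in \mc{B}_E \times \chi_{\bar\theta} \times \chi_{\bar\kappa}$, define
\beq
Z_{B,\xi,\eta} = \bigcup_{n \in B} \bigl(A_{n, h_\xi(n)} \setminus f_\eta(n)\bigr).
\eeq
Each $Z_{B,\xi,\eta}$ belongs to $U$: for $n \in B$ we have $A_{n, h_\xi(n)} \in U_n$ and $[f_\eta(n),\kappa_n) \in U_n$ by uniformity of $U_n$ (since $f_\eta(n)<\kappa_n$), so $A_{n, h_\xi(n)} \setminus f_\eta(n) \in U_n$; as $B \in E$, $Z_{B,\xi,\eta} \in U$. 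The target is to show that $\mc{B} = \set{Z_{B,\xi,\eta}}{B\in\mc{B}_E,\ \xi<\chi_{\bar\theta},\ \eta<\chi_{\bar\kappa}}$ is a base for $U$.

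To verify this, take $X \in U$ and pick $B_X \in E$ with $X \cap \kappa_n \in U_n$ for every $n \in B_X$. Using that $\seq{A_{n,\alpha}}{\alpha<\theta_n}$ generates $U_n$ modulo bounded sets, select $\alpha_n < \theta_n$ and $\beta_n < \kappa_n$ with $A_{n,\alpha_n} \setminus \beta_n \sub X$ for $n \in B_X$ and set $\alpha_n = \beta_n = 0$ otherwise; put $h(n) = \alpha_n$, $f(n) = \beta_n$. Choose $\xi$ with $h <_{E^*} h_\xi$. On the $E$-large set $B' = B_X \cap \set{n}{h_\xi(n) > h(n)}$, the $\sub^*$-decreasing property supplies $\gamma_n < \kappa_n$ with $A_{n, h_\xi(n)} \setminus \gamma_n \sub A_{n,\alpha_n}$. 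Define $g \in \prod_n \kappa_n$ by $g(n) = \max(\gamma_n,\beta_n)$ for $n \in B'$ and $g(n)=0$ elsewhere, then pick $\eta$ with $g <_{E^*} f_\eta$. Then $B'' = B' \cap \set{n}{f_\eta(n) > g(n)} \in E$, and for every $n \in B''$,
\beq
A_{n, h_\xi(n)} \setminus f_\eta(n) \;\sub\; A_{n,\alpha_n} \setminus \beta_n \;\sub\; X \cap \kappa_n.
\eeq
Selecting $B \in \mc{B}_E$ with $B \sub B''$ then yields $Z_{B,\xi,\eta} \sub X$.

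The main obstacle is keeping the two layers of domination in the right order: the bounds $\gamma_n$ witnessing $A_{n, h_\xi(n)} \sub^* A_{n,\alpha_n}$ depend on the choice of $\xi$, so $\eta$ must be chosen only after $\xi$ is fixed and the $\gamma_n$'s have been absorbed into $g$. Once this discipline is respected the containments cascade. For the size count, $|\mc{B}| \le |\mc{B}_E|\cdot \chi_{\bar\theta}\cdot \chi_{\bar\kappa}$; since $\kappa$ is strong limit, $|\mc{B}_E| \le 2^{\aleph_0} < \kappa$, and both $\chi_{\bar\kappa}$ and $\chi_{\bar\theta}$ are at least $\kappa^+$ by the usual product cofinality arguments, so $|\mc{B}| = \chi_{\bar\kappa}\cdot\chi_{\bar\theta}$, giving $\uu \le \chi_{\bar\kappa}\cdot\chi_{\bar\theta}$.
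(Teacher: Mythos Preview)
The paper does not give its own proof of this statement: Theorem~\ref{GS:small:here} is quoted verbatim from \cite[Theorem~1.4]{GS:small} and used as a black box, so there is no in-paper argument to compare against. Your proof is correct and is essentially the construction one expects (and, as far as I can tell, the one in \cite{GS:small}): build the $E$-sum $U=\sum_E U_n$ on $\kappa=\bigsqcup_n\kappa_n$ and exhibit a base indexed by $\mc B_E\times\chi_{\bar\theta}\times\chi_{\bar\kappa}$ using the tower generators $A_{n,\alpha}$ together with tails $[f_\eta(n),\kappa_n)$. The only delicate point---that the bounds $\gamma_n$ witnessing $A_{n,h_\xi(n)}\sub^* A_{n,\alpha_n}$ depend on $\xi$, so $\eta$ must be chosen after $\xi$---you handle correctly. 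Two minor remarks: (a) your phrase ``generates $U_n$ modulo bounded sets'' is the right reading, since a $\sub^*$-decreasing tower generating a filter only gives $A_{n,\alpha}\sub^* Y$ for $Y\in U_n$; (b) the inequality $\chi_{\bar\kappa},\chi_{\bar\theta}\ge\kappa^+$ that you invoke for the size count is the standard fact that $\prod_n\kappa_n/E^*$ is $\mu^+$-directed for every $\mu<\kappa$, hence its true cofinality exceeds $\kappa$.
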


By setting $\bar{\kappa} = \seq{\alpha_n}{n<\omega}$ and $\bar{\theta} = \seq{\alpha^+_n}{n<\omega}$ in Theorem \ref{GS:small:here} and realizing that $\alpha_n$ is measurable in $V[\P_\delta *\Q_U]$ and $2^{\alpha_n} = \alpha_n^+$ so that \emph{(v)} holds, we have the desired \beq \uu = \kappa^+.\eeq

\brm
The iteration in \cite{GS:pol} has a singular cardinal length $\delta > \cf(\delta)>\kappa$ with $2^\kappa = \delta$ at the end. However, we can just as easily have $2^\kappa = \mu$ for some regular $\mu$ if we iterate up to an ordinal $\delta \in (\mu,\mu^+)$ with cofinality $\kappa^+$ (or greater); then $2^\kappa = \mu$ and the required pcf properties hold in $V[\P_\delta]$.
\erm

\section{Countable cofinality}\label{sec:countable}

In this section we obtain a singular strong limit cardinal $\kappa$ with countable cofinality on which $\SCH$ fails, $\uu = \kappa^{+}$ and the tree property, stationary reflection and the failure of approachability hold at $\kappa^{++}$.

\subsection{Definition of forcing}

Let $\kappa$ be a Laver-indestructible supercompact cardinal in the sense of the discussion in paragraph (\ref{PL}) and $\lambda$ a weakly compact cardinal above $\kappa$. Let us fix some $\delta \in (\lambda, \lambda^+)$ of cofinality $\kappa^+$.\footnote{Since we will have $2^\kappa = \kappa^{++}$ in the final model, $\kappa^+$ is the only interesting value of $\uu$. But in principle, the forcing can be iterated up to any ordinal with cofinality $>\kappa$.} Let $\P_\delta = \seq{(\P_\xi,\dot{Q}_\xi)}{\xi<\delta}$ be as in \cite{GS:pol}, iterated up to $\delta$: the forcing $\P_\delta$ is a $<\kappa$-supported iteration of the $\bar{\theta}$-dominating forcing $\Qt$ reviewed above, where $\bar{\theta}$'s are chosen so that every $\bar{\theta} \in V$ appears cofinally often below $\delta$. In particular the appropriate form of (\ref{eq:cor2}) holds in $V[\P_\delta]$ for every $\bar{\theta}$ in $V$: \beq \label{eq:gap2a} \tcf (\prod_{\alpha<\kappa}\theta_\alpha,<_{\Jbd})^{V[\P_\delta]} = \kappa^+.\eeq

By results in \cite{GS:pol}, $\P_\delta$ is $\kappa^+$-cc (in fact $\kappa^+$-Knaster) and preserves the supercompactness of $\kappa$, using the preparation $\P^L$ (see the paragraph related to (\ref{PL}) for more details).

Let us now define a forcing which will ensure the compactness principles at $\kappa^{++}$ while also ensuring small $\uu$. It is a version of the Mitchell forcing: it differs from the original Mitchell forcing (denoted $\M(\kappa,\delta)$) in the use of the forcing $\P_\delta$ instead of the Cohen forcing $\Add(\kappa,\delta)$.

\begin{definition}\label{def:Pbasic-s} $\P^*_\delta$ is a forcing with conditions $p = (p^0,p^1)$ such that:
\begin{itemize}
\item $p^0 \in \P_\delta$,
\item $p^1$ is a function with domain $\dom{p^1}$ of size at most $\kappa$ such that 
\beq \label{dom} \dom{p^1} \mbox{ is included in the set of successor cardinals below $\lambda$.}\eeq
\end{itemize}
The ordering is the usual Mitchell ordering: $(p^0,p^1) \le (p'^{0},p'^{1})$ iff $p^0 \le_{\P_\delta} p'^{0}$ and the domain of $p^1$ extends the domain of $p'^{1}$ and for all  $\alpha \in \dom{p'^{1}}$, $$\restr{p^0}{\alpha} \Vdash_{\P_\alpha} p^1(\alpha) \le p'^{1}(\alpha).$$
\end{definition}

\brm \label{rm:dom}

Let us say a few words about $\P^*_\delta$ and some alternative definitions. To show compactness principles in $V[\P^*_\delta]$ and $V[\P^*_\delta * \dot{\Q}_U]$, the ``hands-on'' method usually lifts a certain embedding with critical point $\lambda$ and gives an argument about the resulting quotient forcings. To simplify the quotient argument, it is desirable that $\P^*_\delta$ is uniform in the sense that at many places below $\lambda$ it looks like the tail segment of $\P_\delta$ in interval $[\lambda, \delta)$ (condition (\ref{dom})). However, in view of the preservation theorems for the tree property (see \cite{HS:ind}, and its modification here in Lemma \ref{tp}) and stationary reflection (see Theorem \ref{th:sr}), it is actually not necessary to prepare for the interval $[\lambda, \delta)$ below $\lambda$ because we can deal directly with the forcing $\P_{[\lambda,\delta)}*\dot{\Q}_U$. In particular if we changed the definition so that $\P_\delta$ is the Cohen forcing $\Add(\kappa,\lambda)$ up to $\lambda$ and then continues like $\P_{[\lambda,\delta)}$, we would still get the tree property and stationary reflection (together with small $\uu$) -- it is sufficient to apply the preservation theorems in \cite{HS:ind} and Theorem \ref{th:sr} over the Mitchell model $V[\M(\kappa,\lambda)]$ to the forcing $\P_{[\lambda,\delta)}*\dot{\Q}_U$. However, the preparation seems to be necessary for the failure of approachability because the preservation theorem for non-approachability is only formulated for centered forcings (and $\P_{[\lambda,\delta)}$ is not $\kappa$-centered).
\erm

For any $\gamma \le \delta$, we denote by $\P^*_\gamma$ and $\P_\gamma$ the natural initial stages of the forcing.

One can show that the usual product analysis in \cite{ABR:tree} applies to $\P^*_\delta$: for every ordinal $\gamma \le \delta$ (in fact, only ordinals of cofinality at least $\kappa^+$ are interesting for us) there are projections \beq \label{sigma} \pi_\gamma: \P^*_\gamma \to \P_\gamma \mbox{ and } \sigma_\gamma: \P_\gamma \x \T_\gamma \to \P^*_\gamma,\eeq where $\T_\gamma = \set{(1,p^1)}{(1,p^1) \in \P^*_\gamma}$ is a term forcing which is $\kappa^+$-closed in $V$. This analysis carries over to quotients:  if $G_\gamma$ is $\P^*_\gamma$-generic for an inaccessible $\gamma < \lambda$, and $G^0_\gamma$ is the $\P_\gamma$-generic derived by means of $\pi_\gamma$, then there are projections \beq \label{sigma*} \pi_{\gamma,\delta}:\P^*_\delta/G_\gamma \to \P_\delta/G^0_\gamma \mbox{ and } \sigma_{\gamma,\delta}: \P_\delta/G^0_\gamma \x \T_{\gamma,\delta} \to \P^*_\delta/G_\gamma,\eeq where $\T_{\gamma,\delta} = \set{(1,p^1)}{(1,p^1) \in \P^*_\delta/G_\gamma}$ is a term forcing $\kappa^+$-closed in $V[G_\gamma]$. 

In particular \beq \label{eq:Q} \P^*_\delta \mbox{ is forcing equivalent to } \P_\delta * \dot{R},\eeq for some $\dot{R}$ which is forced to be $\kappa^+$-distributive. By standard arguments, $\P^*_\delta$ collapses cardinals exactly in the interval $(\kappa^+,\Ups)$ and makes $2^\kappa = \kappa^{++} = \Ups$.

The first two lines in (\ref{m:1}) hold in $V[\P_\delta]$ for any normal measure $U$ on $\kappa$ in $V[\P_\delta]$, but they also hold in $V[\P^*_\delta]$ because the quotient $\dot{R}$ in (\ref{eq:Q}) is $\kappa^+$-distributive a hence it does not add new functions into the product $\prod_{\alpha<\kappa}\theta_\alpha$ or new subsets of $\kappa$ ($U$ therefore remains a normal measure in $V[\P^*_\delta]$):

\begin{lemma}\label{lm:key1}
For $U$ as in the previous paragraph and every $\bar{\theta}$ in $V$, \beq \kappa^+ = \tcf (\prod_{\alpha<\kappa}\theta_\alpha,<_{\Jbd})^{V[\P^*_\delta]} = \tcf (\prod_{\alpha<\kappa}\theta_\alpha,<_{U^*})^{V[\P^*_\delta]}.\eeq
\end{lemma}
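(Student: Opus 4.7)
The strategy is to piggyback on (\ref{eq:gap2a}) via the factorization $\P^*_\delta \cong \P_\delta * \dot R$ from (\ref{eq:Q}), where $\dot R$ is forced to be $\kappa^+$-distributive. The crucial consequence is that $\dot R$ adds no new function from $\kappa$ into the ordinals, hence $(\prod_{\alpha<\kappa}\theta_\alpha)^{V[\P^*_\delta]} = (\prod_{\alpha<\kappa}\theta_\alpha)^{V[\P_\delta]}$, and no new bounded subsets of $\kappa$, so $\Jbd^{V[\P^*_\delta]} = \Jbd^{V[\P_\delta]}$. By the same token, the normal measure $U$ from $V[\P_\delta]$ retains $\kappa$-completeness and normality in $V[\P^*_\delta]$ (the argument is parallel to Lemma \ref{lm:vec-U}), so $U^*$ is a proper ideal on $\kappa$ extending $\Jbd$.

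For the first equality, I would take the sequence $\seq{g_{\bar{\theta}_\alpha}}{\alpha \in I}$ of length $\kappa^+$ produced in Lemma \ref{lm:correction2}, which witnesses $\tcf(\prod_{\alpha<\kappa}\theta_\alpha,<_{\Jbd})^{V[\P_\delta]} = \kappa^+$. Since neither the product nor the bounded ideal changes when passing to $V[\P^*_\delta]$, the same sequence remains $<_{\Jbd}$-increasing and dominates every element of the product in the larger model, yielding $\tcf(\prod_{\alpha<\kappa}\theta_\alpha,<_{\Jbd})^{V[\P^*_\delta]} = \kappa^+$.

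For the second equality, I would invoke the pcf fact already used in the justification of the second line of (\ref{m:1}) in Lemma \ref{lm:correction3}(ii): true cofinality is preserved when the ideal is extended. Concretely, since $\Jbd \sub U^*$ implies $<_{\Jbd} \sub <_{U^*}$, the same $\kappa^+$-scale from the preceding paragraph is still $<_{U^*}$-increasing and $<_{U^*}$-dominating in $V[\P^*_\delta]$, proving the second equality. The only potential pitfall is to be sure that the product and the ideal are genuinely identical in $V[\P_\delta]$ and $V[\P^*_\delta]$, which in turn reduces to the (already granted) $\kappa^+$-distributivity of $\dot R$; once this is noted, both equalities drop out without any further combinatorial work.
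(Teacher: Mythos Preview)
Your proposal is correct and follows essentially the same approach as the paper: use the $\kappa^+$-distributivity of $\dot R$ from (\ref{eq:Q}) to conclude that no new functions in $\prod_{\alpha<\kappa}\theta_\alpha$ (and no new subsets of $\kappa$) are added, so the scale from Lemma \ref{lm:correction2} still witnesses the true cofinality in $V[\P^*_\delta]$, and then extend the ideal from $\Jbd$ to $U^*$ exactly as in Lemma \ref{lm:correction3}(ii). The paper's proof is a single sentence stating the first of these two observations; you have simply made the implicit steps (preservation of $\Jbd$, of $U$, and the ideal-extension argument for the $U^*$ equality) explicit.
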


\begin{proof}
By (\ref{eq:Q}), $\dot{R}$ does not add new functions to $\prod_{\alpha<\kappa}\theta_\alpha$, and so every sequence witnessing the true cofinality in $V[\P_\delta]$ witnesses the same fact in $V[\P^*_\delta]$.
\end{proof}

Let $U$ be any normal measure in $V[\P_\delta]$ on $\kappa$, and let $\dot{\Q}_U$ be the Prikry forcing defined with respect to $U$ in $V[\P^*_\delta]$ (note that $\Q_U$ is an element of $V[\P_\delta]$).

\begin{definition}\label{def:P}
Let $\R$ denote the forcing $\P^*_\delta * \dot{\Q}_U$.
\end{definition}

We are going the prove the following theorem:

\begin{theorem}\label{th:countable}
Let $\kappa$ be Laver-indestructible in the sense of (\ref{PL}) and $\Ups$ a weakly compacty cardinal above $\kappa$. Then in $V[\R]$, $2^\kappa = \kappa^{++} = \Ups$, $\kappa$ is a singular strong limit cardinal of countable cofinality and the following hold:
\bce[(i)]
\item $\uu = \kappa^+$.
\item $\TP(\kappa^{++})$, $\SR(\kappa^{++})$ and $\neg \AP(\kappa^{++})$.
\ece
\end{theorem}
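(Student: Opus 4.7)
The plan is to verify the cardinal arithmetic directly from the product analysis of $\P^*_\delta$, and then to establish each of the four combinatorial claims by a two-step schema: first prove the property in $V[\P^*_\delta]$ by a Mitchell-style lifting of a weakly compact embedding with critical point $\Upsilon$, and then transfer it through the $\kappa$-centred Prikry forcing $\Q_U$ by an appropriate preservation theorem. The cardinal arithmetic itself follows directly from the projection $\sigma_\delta$ of (\ref{sigma}) and the factorisation $\P^*_\delta \simeq \P_\delta * \dot R$ of (\ref{eq:Q}), with $\dot R$ forced to be $\kappa^+$-distributive: this yields that $\P^*_\delta$ collapses exactly the cardinals in $(\kappa^+,\Upsilon)$ and forces $2^\kappa = \kappa^{++} = \Upsilon$, and the $\kappa^+$-distributivity of $\dot R$ ensures that the normal measure $U$ on $\kappa$ of $V[\P_\delta]$ remains normal in $V[\P^*_\delta]$, so that $\Q_U$ is well defined and singularises $\kappa$ to cofinality $\omega$ without collapsing any cardinals. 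Choosing $U$ so that the Prikry sequence $\la \alpha_n \mid n < \omega \ra$ lies in $\bar\kappa$ gives the stated cardinal structure of $V[\R]$.

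For claim (i), I would apply Theorem \ref{GS:small:here} in $V[\R]$ with $\bar\kappa = \la \alpha_n \mid n < \omega \ra$ and $\bar\theta = \la \alpha_n^+ \mid n < \omega \ra$. Hypothesis (v) amounts to having a $\subseteq^*$-base of size $\alpha_n^+$ for a uniform ultrafilter on each $\alpha_n$, which follows because $\GCH$ at $\alpha_n$ is preserved through $\P_\delta * \dot R * \dot\Q_U$ (the iteration $\P_\delta$ is a $<\kappa$-support iteration of $<\kappa$-strategically closed forcings, $\dot R$ is $\kappa^+$-distributive, and Prikry forcing has the Prikry property, so no new subsets of $\alpha_n$ are added). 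Hypothesis (vi) is exactly the last chain of equalities in (\ref{m:1}), which transfers from $V[\P_\delta]$ to $V[\P^*_\delta]$ by Lemma \ref{lm:key1} and then to $V[\R]$ by the argument of Lemma \ref{lm:correction3}(ii). This yields $\uu \le \kappa^+$, and combined with the ZFC lower bound $\uu \ge \kappa^+$ of \cite{GS:small} gives the equality.

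For the three parts of (ii), I would first establish each property in $V[\P^*_\delta]$ by lifting a weakly compact embedding $j : V \to M$ with critical point $\Upsilon$ through $\P^*_\delta$: decompose $j(\P^*_\delta)$ as $\P^*_\delta * \dot\P^*_{[\delta, j(\delta))}$, construct a generic for the tail using the tail analogue of the $\kappa^+$-closed term forcing $\T_{\gamma,\delta}$ from (\ref{sigma*}), and invoke Fact \ref{f:spencer} to rule out unwanted cofinal branches for the tree property, with the standard reflection arguments handling $\SR$ and $\neg \AP$. The transfer from $V[\P^*_\delta]$ to $V[\R]$ is then by the preservation results cited in the excerpt: $\TP(\kappa^{++})$ transfers by the indestructibility result of \cite{HS:ind}, since $\Q_U$ is $\kappa^+$-cc and $V[\P^*_\delta]$ has the required Mitchell form (cf.\ Remark \ref{rm:dom}); $\SR(\kappa^{++})$ transfers by Theorem \ref{th:sr}, since $\Q_U$ is $\kappa^+$-cc; and $\neg \AP(\kappa^{++})$ transfers by the result of \cite{GK:a}, since $\Q_U$ is $\kappa$-centred.

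The step I expect to require the most care is the $\neg \AP(\kappa^{++})$ argument in $V[\P^*_\delta]$: the uniform design of $\P^*_\delta$ imposed by (\ref{dom}) is precisely what allows the lifting to produce a non-approachable sequence at $\Upsilon$, and any weakening of this design (for example, replacing the Mitchell component by a bare iteration in the interval $[\lambda, \delta)$) would obstruct the eventual application of the Gitik-Krueger preservation, since we cannot afford to lose $\kappa$-centredness of $\Q_U$ in the transfer step. A secondary but still nontrivial point is to check that the normal measure $U$ chosen in $V[\P_\delta]$ can be arranged so that the resulting Prikry sequence indeed lies inside a fixed sequence $\bar\kappa$ of measurables with $\GCH$, which is what makes hypotheses (v) and (vi) of Theorem \ref{GS:small:here} simultaneously available in $V[\R]$.
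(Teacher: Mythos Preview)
Your treatment of $(i)$ and of $\neg\AP$ matches the paper essentially verbatim (Lemmas~\ref{4.5}, \ref{ap-delta}, \ref{ap}). The significant divergence is in the tree property argument, and there your two-step plan carries a gap.

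The paper does \emph{not} first prove $\TP(\kappa^{++})$ in $V[\P^*_\delta]$ and then invoke \cite{HS:ind} as a black box to preserve it through $\Q_U$. Instead (Lemma~\ref{tp}) it factors $\R \equiv \P^*_\lambda * \dot\Q$ with $\dot\Q$ a $\P_\lambda$-name for the single $\kappa^+$-cc forcing $\P_{[\lambda,\delta)}*\dot\Q_U$, lifts one weakly compact embedding $j:M\to N$ through $(\P_\lambda\times\T_\lambda)*\dot\Q$, and analyses the quotient $Q = j(\P^*_\lambda*\dot\Q)/G*h$ by exhibiting a projection from $(j(\P_\lambda*\dot\Q)/G^0*h)\times\T_{\lambda,j(\lambda)}$ (Claim~\ref{pi}), then applying Facts~\ref{Kunen} and~\ref{f:spencer} via Claim~\ref{claim:cc}. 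Your appeal to \cite{HS:ind} does not cover this: that result is stated for the standard Mitchell extension $V[\M(\kappa,\lambda)]$ and $\kappa^+$-cc forcings living in $V[\Add(\kappa,\lambda)]$, whereas here the Cohen component has been replaced throughout by the Garti--Shelah iteration $\P_\delta$. Remark~\ref{rm:dom}, which you cite in support, in fact makes the opposite point --- it says that only if one \emph{reverts} to $\Add(\kappa,\lambda)$ below $\lambda$ can \cite{HS:ind} be quoted directly; in the present design the argument must be redone, and Lemma~\ref{tp} is precisely that adaptation. So the ``preservation'' step you defer to a citation is where the actual work lies.

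Two smaller points. First, your decomposition $j(\P^*_\delta)=\P^*_\delta * \dot\P^*_{[\delta,j(\delta))}$ misplaces the hinge: the collapse coordinate of $\P^*_\delta$ is supported only on successor cardinals below $\lambda$ (condition~(\ref{dom})), so the natural split is at $\lambda$, not at $\delta$ --- this is exactly why the paper bundles $\P_{[\lambda,\delta)}$ together with $\Q_U$ into the $\kappa^+$-cc tail $\dot\Q$. Second, for $\SR$ the paper also works from $V[\P^*_\lambda]$ (where $\SR(\kappa^{++})$ is standard) and applies Theorem~\ref{th:sr} once to the full $\kappa^+$-cc tail $\P_{[\lambda,\delta)}*\dot\Q_U$, rather than first lifting to $V[\P^*_\delta]$ and then preserving through $\Q_U$ alone; your route is viable here but less economical.
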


The proof will be given in a sequence of lemmas in three subsections.

\subsection{Small ultrafilter number}

\begin{lemma}\label{4.5}
$\uu = \kappa^+$ holds in $V[\R]$.
\end{lemma}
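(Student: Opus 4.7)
The plan is to invoke Theorem \ref{GS:small:here} with $\bar{\kappa}=\seq{\alpha_n}{n<\omega}$ being the Prikry sequence and $\bar{\theta}=\seq{\alpha_n^+}{n<\omega}$, so that the conclusion will be $\uu\le \chi_{\bar{\kappa}}\cdot\chi_{\bar\theta}=\kappa^+$; combined with the lower bound $\uu\ge\kappa^+$ from \cite{GS:small}, this gives equality. Concretely, I need to verify (in $V[\R]$) the hypotheses (i)--(vi) of Theorem \ref{GS:small:here}, namely measurability of the $\alpha_n$'s together with $2^{\alpha_n}=\alpha_n^+$ (to produce the ultrafilters $U_n$ with $\sub^*$-decreasing generating sequences of length $\alpha_n^+$), and the two pcf equalities $\chi_{\bar\kappa}=\chi_{\bar\theta}=\kappa^+$.

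First I would pin down the preservation of measurability and of the local $\GCH$ pattern. Since $\P^*_\delta$ is equivalent to $\P_\delta*\dot R$ with $\dot R$ forced to be $\kappa^+$-distributive (equation (\ref{eq:Q})), and since $\P_\delta$ is $<\kappa$-(strategically) closed, no bounded subsets of $\kappa$ are added over $V$. Thus each measurable $\kappa_\alpha<\kappa$ from the ground model remains measurable in $V[\P^*_\delta]$ and retains $2^{\kappa_\alpha}=\kappa_\alpha^+$. Then $\Q_U$, having the Prikry property, adds no bounded subsets of $\kappa$ either, so in $V[\R]$ each Prikry point $\alpha_n$ is still measurable with $2^{\alpha_n}=\alpha_n^+$. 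A standard diagonal-intersection argument applied to a well-ordering of a normal measure on $\alpha_n$ (of order type $\alpha_n^+$) then produces a $\sub^*$-decreasing base for some uniform ultrafilter $U_n$ of length $\theta_n=\alpha_n^+$, which is exactly clause (v) of Theorem \ref{GS:small:here}.

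Second, I would transport the true cofinality from $V[\P^*_\delta]$ into $V[\R]$. Lemma \ref{lm:key1}, applied to $\bar\theta=\bar\kappa$ and to $\bar\theta=\bar\kappa^+$ (both in $V$), yields in $V[\P^*_\delta]$
\[
\tcf\bigl(\textstyle\prod_{\alpha<\kappa}\kappa_\alpha,<_{U^*}\bigr)=\tcf\bigl(\textstyle\prod_{\alpha<\kappa}\kappa_\alpha^+,<_{U^*}\bigr)=\kappa^+.
\]
Now I would rerun the argument of \cite[Lemma 3.1]{GS:pol} (used in Lemma \ref{lm:correction3} above) over $V[\P^*_\delta]$ in place of $V[\P_\delta]$: because the Prikry sequence $\seq{\alpha_n}{n<\omega}$ is obtained from $U$ and lies inside $\bar\kappa$, for any ultrafilter $E$ on $\omega$ extending $\Jbdpar{\omega}$ the true cofinalities transfer, giving
\[
\tcf\bigl(\textstyle\prod_{n<\omega}\alpha_n,<_{E^*}\bigr)^{V[\R]}=\tcf\bigl(\textstyle\prod_{n<\omega}\alpha_n^+,<_{E^*}\bigr)^{V[\R]}=\kappa^+.
\]
With clauses (i)--(vi) verified, Theorem \ref{GS:small:here} delivers $\uu\le\kappa^+$ in $V[\R]$, hence $\uu=\kappa^+$.

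The main obstacle I anticipate is the transport step: one must check that the $\kappa^+$-distributive quotient $\dot R$ does not perturb the ultrapower computation needed for the pcf transfer (no new functions in $\prod_{\alpha<\kappa}\kappa_\alpha$ or $\prod_{\alpha<\kappa}\kappa_\alpha^+$ from $V[\P_\delta]$ to $V[\P^*_\delta]$, which Lemma \ref{lm:key1} already records), and that the Prikry step in \cite[Lemma 3.1]{GS:pol} only requires $U$ to be a normal measure on $\kappa$ in the ambient model -- which holds in $V[\P^*_\delta]$ again by the $\kappa^+$-distributivity of $\dot R$. Everything else is routine bookkeeping.
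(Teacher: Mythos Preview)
Your proposal is correct and follows essentially the same route as the paper: use Lemma \ref{lm:key1} to carry the pcf facts from $V[\P_\delta]$ to $V[\P^*_\delta]$, then apply \cite[Lemma 3.1]{GS:pol} over $V[\P^*_\delta]$ for the Prikry step, and finish with Theorem \ref{GS:small:here} in $V[\R]$. You simply unpack more of the hypotheses of Theorem \ref{GS:small:here} (in particular clause (v), via the diagonal-intersection base for a normal measure under local $\GCH$) than the paper bothers to state, but the underlying argument is the same.
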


\begin{proof}
This follows the same way as for $V[\P_\delta][\dot{\Q}_U]$ in \cite{GS:pol,GS:small}: by Lemma \ref{lm:key1}, the relevant pcf facts are still true in $V[\P^*_\delta]$, and so \cite[Lemma 3.1]{GS:pol} can be applied over $V[\P^*_\delta]$. The result follows by \cite[Theorem 1.4]{GS:small} -- which we reviewed in Theorem \ref{GS:small:here} -- applied in $V[\R] = V[\P^*_\delta][\dot{\Q}_U]$ (notice that Theorem \ref{GS:small:here} is a $\ZFC$ theorem).
\end{proof}

\subsection{Tree property}\label{sec:tp}

In \cite{HS:ind}, we proved that over the Mitchell model, the tree property at $\lambda$ is indestructible under all $\kappa^+$-cc forcings living in the intermediate model $V[\Add(\kappa,\lambda)]$.

We indicate how to modify the argument from \cite{HS:ind} to be applicable in the present context.

\begin{lemma}\label{tp}
$\TP(\kappa^{++})$ holds in $V[\R]$.
\end{lemma}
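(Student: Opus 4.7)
The plan is to run the standard Mitchell-style weak compactness argument for $\TP(\kappa^{++})$ in $V[\R]$, in the spirit of the indestructibility result of \cite{HS:ind}. The crucial observation is that $\dot\Q_U$ sits already inside $V[\P_\delta]$: by (\ref{eq:Q}), $\P^*_\delta \equiv \P_\delta * \dot R$ with $\dot R$ forced to be $\kappa^+$-distributive, and therefore
\[
\R = \P^*_\delta * \dot\Q_U \;\equiv\; \P_\delta * \dot\Q_U * \dot R.
\]
Consequently $\R$ admits the same two-step Mitchell decomposition as $\P^*_\delta$: a $\kappa^+$-Knaster ``Cohen-like'' front part $\P_\delta * \dot\Q_U$ (Knasterness follows from $\P_\delta$ being $\kappa^+$-Knaster and $\Q_U$ being $\kappa$-centered: thin the first coordinate first by Knasterness, then pigeonhole on Prikry stems in the second), followed by a $\kappa^+$-distributive tail. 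The projection/term-forcing analysis (\ref{sigma})--(\ref{sigma*}) then transfers verbatim to $\R$: for each inaccessible $\gamma<\lambda$ one obtains a projection $\R / G^*_\gamma \to (\P_\delta * \dot\Q_U)/G^0_\gamma$ together with a factorization
\[
(\P_\delta * \dot\Q_U)/G^0_\gamma \times \widetilde{\T}_{\gamma,\delta} \;\to\; \R / G^*_\gamma,
\]
where $\widetilde{\T}_{\gamma,\delta}$ is $\kappa^+$-closed in $V[G^0_\gamma]$.

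Given this decomposition, the weak compactness argument proceeds as usual. Let $T \in V[\R]$ be a $\kappa^{++}$-tree, which after pruning may be assumed well-pruned and (for contradiction) Aronszajn. Using the weak compactness of $\lambda = \kappa^{++}$ in $V$, fix a suitable transitive $M$ of size $\lambda$ closed under $<\lambda$-sequences containing $\R$ and $\dot T$, together with an elementary embedding $j : M \to N$ with critical point $\lambda$; lift $j$ through $\R$ via a master condition in the $\kappa^+$-closed tail of $j(\R)/\R$; and extract from any node on level $\lambda$ of $j(T)$ a cofinal branch $b$ through $T$ living in the extension by $j(\R)$. It remains to show $b \in V[\R]$. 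The quotient $j(\R)/\R$ decomposes over $V[\R]$, via the lifted Mitchell factorization, as $P * \dot Q$ with $P$ being $\kappa^+$-cc and $\dot Q$ being $\kappa^+$-closed. Fact \ref{Kunen} rules out $P$ adding $b$ (since $T$ is well-pruned Aronszajn), and Fact \ref{f:spencer} rules out $\dot Q$ adding $b$ over $V[\R][P]$ (the hypothesis $\kappa < \kappa^{++} \le 2^\kappa$ holds in $V[\R]$ and persists to $V[\R][P]$ as $P$ is $\kappa^+$-cc). Hence $b \in V[\R]$, contradicting Aronszajn-ness.

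The main obstacle is to verify that the technical machinery of \cite{HS:ind} -- the Mitchell lifting and the branch-capture bookkeeping -- relies only on the $\kappa^+$-Knasterness and $<\kappa$-strategic closure of the ``Cohen-like'' front part, rather than on specific features of $\Add(\kappa,\lambda)$. Since $\P_\delta$ shares both properties with $\Add(\kappa,\lambda)$ (by the review in Section \ref{sec:review}), and both are preserved after composing with the $\kappa$-centered Prikry forcing $\dot\Q_U$, the argument of \cite{HS:ind} should transfer to the present setting with only notational adjustments in the projection diagrams.
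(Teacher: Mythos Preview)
Your plan is essentially the paper's: rewrite $\R$ so that the $\kappa^+$-cc ``front part'' is $\P_\delta * \dot\Q_U$ (equivalently $\P_\lambda * \dot\Q$ with $\dot\Q = \P_{[\lambda,\delta)} * \dot\Q_U$ a $\P_\lambda$-name), run the Mitchell term-forcing analysis, lift a weakly compact embedding, and invoke Facts~\ref{Kunen} and~\ref{f:spencer} on the quotient. But your execution of the branch-exclusion step has a genuine gap. You assert that $j(\R)/\R$ decomposes ``over $V[\R]$'' as an iteration $P * \dot Q$ with $\dot Q$ $\kappa^+$-closed, and then apply Fact~\ref{f:spencer} over $V[\R]$. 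The Mitchell analysis does not give this: it gives a projection from a \emph{product} $\bigl(j(\P_\lambda * \dot\Q)/(G^0 * h)\bigr) \times \T_{\lambda,j(\lambda)}$ onto the quotient, and the term-forcing factor $\T_{\lambda,j(\lambda)}$ is $\kappa^+$-closed only in $N[G]$ (the $\P^*_\lambda$-extension), not after further forcing with the $\kappa^+$-cc tail $h$. Once $\kappa$ is singularized there are new $\kappa$-sequences of term-forcing conditions whose putative lower bounds need not lie in the model where $\T_{\lambda,j(\lambda)}$ lives, so closure over $N[G][h]$ is not available. The paper therefore applies Fact~\ref{f:spencer} over $N[G]$, with the $\kappa^+$-cc forcing taken to be the \emph{entire} composition $\dot\Q^{G^0} * j(\P_\lambda * \dot\Q)/G^0 * \dot h$ (this is the content of Claim~\ref{claim:cc}(iii)); only with this bookkeeping is the closed factor genuinely closed in the base model while $\dot T$ is a name for the cc forcing.

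A secondary issue: lifting $j$ ``via a master condition in the $\kappa^+$-closed tail'' does not work as stated, since $j''G$ has size $\lambda > \kappa^+$ and $\kappa^+$-closure is insufficient to produce a master condition. The paper avoids this by taking a $V$-generic for $j\bigl((\P_\lambda \times \T_\lambda) * \dot\Q\bigr)$ directly and using the $\lambda$-cc of $(\P_\lambda \times \T_\lambda) * \dot\Q$ to conclude that $j$ restricted to it is a regular embedding in $N$; the $M$-generic and the lifting then come for free, with no master condition required.
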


\begin{proof}
Let us fix a $\P_\lambda$-name $\dot{\Q}$ for the two-step iteration $\P_{[\lambda,\delta)} * \dot{\Q}_U$ over $V[\P^*_\lambda]$ (this can be done because $\P_{[\lambda,\delta)} * \dot{\Q}_U$ is already added by $\P_\lambda$ because we assume that $U$, and hence $\Q_U$, are elements of $V[\P_\delta]$).

We will show that the $\kappa^+$-cc of $\dot{\Q}$ is enough to argue that \beq \label{equiv} \P^*_\delta * \dot{\Q}_U \equiv \P^*_\lambda * \dot{\Q} \eeq forces the tree property, without using any other properties of the Prikry forcing $\dot{\Q}_U$.

Suppose for contradiction there is a $\lambda$-Aronszajn tree $T$ in $V[\R]$ and let us fix a name $\dot{T}$ for $T$ (we view $T$ a tree on $\lambda$). Let us fix a weakly compact embedding $j: M \to N$ with critical point $\lambda$ such that $M$ has size $\lambda$, is closed under $<\lambda$-sequences and contains all relevant parameters, in particular $\dot{T}$ and the forcing $(\P_\lambda \x \T_\lambda) * \dot{\Q}$. We can further assume that $j$ itself is an element of $N$ which implies -- by the $\lambda$-cc of $(\P_\lambda \x \T_\lambda) * \dot{\Q}$ -- that $\restr{j}{(\P_\lambda \x \T_\lambda) * \dot{\Q})}$ is a regular embedding in $N$.

By (\ref{sigma}), we know there is a projection $\sigma_\lambda: \P_\lambda \x \T_\lambda \to \P^*_\lambda$. Let $(\tilde{G}^0 \x \tilde{G}^1) * h^*$ be $j((\P_\lambda \x \T_\lambda) * \dot{\Q})$-generic filter over $V$ (and hence also over $N$). Since $(\P_\lambda \x \T_\lambda) * \dot{\Q}$ is $\lambda$-cc, $j$ restricted to $(\P_\lambda \x \T_\lambda) * \dot{\Q}$ sends maximal antichains to maximal antichains, and therefore $(\tilde{G}^0 \x \tilde{G}^1) * h^*$ generates an $M$-generic filter $(G^0 \x G^1) * h$ for $(\P_\lambda \x \T_\lambda) * \dot{\Q}$; furthermore, $\tilde{G}^0 \x \tilde{G}^1$ generates an $N$-generic filter $G^*$ for $j(\P^*_\lambda)$ and an $M$-generic filter $G$ for $\P^*_\lambda$ such that $j$ lifts in $V[(\tilde{G}^0 \x \tilde{G}^1)*h^*]$ to: \beq \label{eq:decompose} j: M[G][h] \to N[G^* * h^*] = N[G][h][G_Q],\eeq where $G_Q$ is a generic filter for the quotient $Q = j(\P^*_\lambda*\dot{\Q})/G*h$. 

We will to show that over $N[G][h]$, \beq \label{eq:embed}\mbox{there is a projection onto } Q \mbox{ from } j(\P_\lambda * \dot{\Q}))/(G^0 * h) \x \T_{\lambda,j(\lambda)},\eeq where $\T_{\lambda,j(\lambda)}$ is the term forcing of $j(\P^*_\lambda)/G$ (it is composed of conditions of the form $(1_{j(\P_\lambda)},p^1)$). We will further show that $j(\P_\lambda * \dot{\Q})/(G^0 * h)$ is $\kappa^+$-cc over $N[G][h]$ and $\T_{\lambda,j(\lambda)}$ is $\kappa^+$-closed in $N[G]$ which will allow us to finish the argument as in \cite{HS:ind}.

Since $j$ is the identity on the conditions in $G$, we have \beq j''(G*h) = \set{(p,j(\dot{q}))}{p \in G \mbox{ and }\dot{q}^G \in h}.\eeq

Let us write explicitly the relevant quotients we are going to use: \begin{multline} Q = \set{(p^*,\dot{q}^*) \in j(\P^*_\lambda * \dot{\Q})}{N[G][h] \models \mbox{``}(p^*,\dot{q}^*) \\ \mbox{ is compatible with }j''(G*h)\mbox{''}},\end{multline} where we can assume that $\dot{q}^*$ depends by elementarity only on $j(\P_\lambda)$. Further, \begin{multline} j(\P_\lambda * \dot{\Q})/(G^0 * h) = \set{(p^{*0},\dot{q}^*) \in j(\P_\lambda * \dot{\Q})}{N[G^0][h] \models \\ \mbox{``}(p^{*0},\dot{q}^*) \mbox{ is compatible with } j''(G^0 * h)\mbox{''}}.\end{multline} Lastly, \begin{multline} \T_{\lambda,j(\lambda)} = \{(1_{j(\P_\lambda)},p^{*1}) \;| \\ N[G] \models \mbox{``}(1_{j(\P_\lambda)},p^{*1}) \mbox{ is compatible with }j''G = G\mbox{''}\}.\end{multline}

Let us define a function $\pi: j(\P_\lambda * \dot{\Q}))/(G^0 * h) \x \T_{\lambda,j(\lambda)} \to Q$ by \beq \pi((p^{*0},\dot{q}^*),p^{*1}) = (p^*,\dot{q}^*),\eeq where $p^* = (p^{*0},p^{*1})$.

\begin{Claim}\label{pi}
$\pi$ is a projection from $j(\P_\lambda * \dot{\Q})/G^0 *h \x \T_{\lambda,j(\lambda)}$ onto $Q$.
\end{Claim}

\begin{proof}
First notice that $\pi$ is correctly defined: if $(p^{*0},\dot{q}^*)$ is compatible with $j''(G^0 *h)$, and $(1_{j(\P_\lambda)},p^{*1})$ is compatible with $G$, then $(p^*,\dot{q}^*)$ is compatible with $j''(G*h)$.

If $((p^{*0},\dot{q}^*),p^{*1}) \le ((r^{*0},\dot{s}^*),r^{*1})$, then clearly $p^* \le r^*$; moreover, $p^{*0} \Vdash \dot{q}^* \le \dot{s}^*$ implies $p^* \Vdash \dot{q}^* \le \dot{s}^*$ because $p^* = (p^{*0},p^{*1})$. It follows $(p^*,\dot{q}^*) \le (r^*, \dot{s}^*)$, and hence $\pi$ is order-preserving.

Suppose now $(p^*,\dot{q}^*) \le \pi((r^{*0},\dot{s}^*),r^{*1})= (r^*,\dot{s}^*)$ are given. We wish to find a condition extending $((r^{*0},\dot{s}^*),r^{*1})$ whose $\pi$-image extends $(p^*,\dot{q}^*)$. First notice that $p^* \Vdash \dot{q}^* \le \dot{s}^*$ implies $p^{*0} \Vdash \dot{q}^* \le \dot{s}^*$ because of our convention that $\dot{q}^*$ and $\dot{s}^*$ depend only on $\P_\lambda$. Now we use a standard trick with names: Consider conditions $(p^{*0}, \dot{q}^*)$ and $p^{*1'}$ where the name $p^{*1'}$ interprets as $p^{*1}$ below $p^{*0}$ and as $r^{*1}$ otherwise; then $((p^{*0},\dot{q}^*),p^{*1'})$ is as required.
\end{proof}

Finally, we need the following Claim:

\begin{Claim}\label{claim:cc}
\bce[(i)]
\item $\T_{\lambda,j(\lambda)}$ is $\kappa^+$-closed in $N[G]$.
\item $j(\P_\lambda * \dot{\Q})/G^0 *h$ is $\kappa^+$-cc over $N[G][h]$.
\item $\dot{\Q}^{G^0} * j(\P_\lambda * \dot{\Q})/G^0 * \dot{h}$ is $\kappa^+$-cc over $N[G]$, where $j(\P_\lambda * \dot{\Q})/G^0 * \dot{h}$ denotes a $\dot{\Q}^{G^0}$-name for the quotient.
\ece
\end{Claim}

\begin{proof}
(i) This a standard fact (see for instance \cite{ABR:tree}).

(ii) By elementarity, \beq j(\P_\lambda * \dot{\Q}) \mbox{ is $\kappa^+$-cc over $N$.}\eeq The term forcing $\T_\lambda$ is $\kappa^+$-closed over $N$. By Easton's lemma \beq \label{eq:ccc} j(\P_\lambda*\dot{\Q}) \mbox{ is $\kappa^+$-cc over $N[G^1]$.}\eeq Since $j$ restricted to $\P_\lambda * \dot{\Q}$ is a regular embedding, $j(\P_\lambda * \dot{\Q})$ factors over $N$ (and then also over $N[G^1]$) as \beq (\P_\lambda * \dot{\Q}) * j(\P_\lambda * \dot{\Q})/\dot{G}^0 * \dot{h},\eeq  where $j(\P_\lambda * \dot{\Q})/\dot{G}^0 * \dot{h}$ is an $\P_\lambda * \dot{\Q}$-name for the quotient. It follows by (\ref{eq:ccc}), and properties of two-step iterations, that over $N[G^1]$, the $\kappa^+$-cc forcing $\P_\lambda * \dot{\Q}$ forces that $ j(\P_\lambda * \dot{\Q})/\dot{G}^0 * \dot{h}$ is $\kappa^+$-cc. In particular, $j(\P_\lambda * \dot{\Q})/G^0 *h$ is $\kappa^+$-cc over $N[G^1][G^0*h]$.

Since there is a natural projection from $(\P_\lambda *\dot{\Q}) \x \T_\lambda$ onto $\P^*_\lambda * \dot{\Q}$ (analogously to the projection $\pi$ mentioned above), it follows that $j(\P_\lambda * \dot{\Q})/G^0 *h$ is $\kappa^+$-cc over $N[G][h]$ as desired (since the chain condition is preserved downwards).

(iii) Recall that $\dot{\Q}^{G^0}$ is $\kappa^+$-cc in $N[G]$ by our initial assumptions. By (ii) of the present Claim, $\dot{\Q}^{G^0}$ forces over $N[G]$ that $j(\P_\lambda * \dot{\Q})/G^0 *\dot{h}$ is $\kappa^+$-cc. By general forcing properties this means the two-step iteration $\dot{\Q}^{G^0} * j(\P_\lambda * \dot{\Q})/G^0 * \dot{h}$ is $\kappa^+$-cc in $N[G]$.
\end{proof}

We assume for contradiction there is in $M[G][h]$ a $\lambda$-Aronszajn tree $T$. By standard arguments, we can assume that $T$ is also in $N[G][h]$ (and is Aronszajn here), and $T$ has a cofinal branch in $N[G][h][G_Q]$ because of the lifted embedding $j$ in (\ref{eq:decompose}). We will argue that the forcing $Q$ cannot add a cofinal branch to $T$, which is a contradiction.

Working over $N[G][h]$, $\lambda = (\kappa^{++})^{N[G][h]}$ and therefore by Fact \ref{Kunen}(ii) and Claim \ref{claim:cc}(ii), $j(\P_\lambda * \dot{\Q})/G^0 *h$ cannot add a cofinal branch to the $\lambda$-Aronszajn tree $T$. Using the fact that $2^\kappa = \lambda$ in $N[G]$, and Fact \ref{f:spencer} applied over $N[G]$ to the $\kappa^+$-closed forcing $\T_{\lambda,j(\lambda)}$ and to the $\kappa^+$-cc forcing $\dot{\Q}^{G^0} * j(\P_\lambda * \dot{\Q})/G^0 * \dot{h}$ (see Claim \ref{claim:cc}(iii)), it follows that $\T_{\lambda,j(\lambda)}$ cannot add a cofinal branch to $T$ over a generic extension of $N[G][h]$ by the quotient $j(\P_\lambda * \dot{\Q})/G^0 *h$. Thus, the product \beq \label{product} \T_{\lambda,j(\lambda)} \x j(\P_\lambda * \dot{\Q})/G^0 *h \eeq does not add cofinal branches to $T$ over $N[G][h]$. However, by Claim \ref{pi}, there is a projection onto the quotient $Q$ from the product (\ref{product}), and therefore $Q$ cannot add a cofinal branch to $T$. It follows that $T$ has no cofinal branch in $N[G][h][G_Q]$ which is the desired contradiction.

This ends the proof of Lemma \ref{tp}.
\end{proof}

\subsection{Stationary reflection}\label{sec:sr}

It is standard to show that stationary reflection holds at $\kappa^{++}$ in $V[\P^*_\lambda]$, so it remains to show that the $\kappa^+$-cc forcing $\P_{[\lambda,\delta)} * \dot{\Q}_U$ preserves $\SR(\kappa^{++})$ over $V[\P^*_\lambda]$. This follows from Theorem \ref{th:sr} with $\lambda = \kappa^+$.

\brm
We should stress that unlike the tree property argument, we do not need that $\P_{[\lambda,\delta)} * \dot{\Q}_U$ should live already in $V[\P_\lambda]$ -- the reason is that the preservation Theorem \ref{th:sr} for stationary reflection is much stronger than the preservation theorem for the tree property.
\erm

\begin{theorem}\label{th:sr}
Suppose $\lambda$ is a regular cardinal, $\SR(\lambda^+)$ holds and $\Q$ is $\lambda$-cc. Then $\SR(\lambda^+)$ holds in $V[\Q]$.
\end{theorem}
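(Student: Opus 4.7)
The plan is to argue by contradiction and extract a ``skeleton'' set in $V$ on which to apply $\SR(\lambda^+)^V$. Suppose for contradiction that some $p \in \Q$ forces that $\dot S$ is a stationary subset of $\lambda^+ \cap \cof(<\lambda)$ which does not reflect at any point of cofinality $\lambda$ in $V[\Q]$. A standard consequence of $\lambda$-cc (for $\lambda$ regular) is that cofinalities $\ge \lambda$ are preserved: any $V[\Q]$-cofinal sequence of length $<\lambda$ into an ordinal has its range included in a $V$-set of cardinality $<\lambda$. In particular $\cof^V(<\lambda) \cap \lambda^+ = \cof^{V[\Q]}(<\lambda) \cap \lambda^+$, and $V$-ordinals of cofinality $\lambda$ still have cofinality $\lambda$ in $V[\Q]$.

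In $V$, define
\[
T = \{\alpha < \lambda^+ : \exists\, q \le p,\ q \Vdash_\Q \alpha \in \dot S\} \subseteq \lambda^+ \cap \cof^V(<\lambda).
\]
I first check that $T$ is stationary in $V$: for any club $C \in V$, take a generic $G \ni p$; since $C$ is still a club in $V[G]$ and $\dot S^G$ is stationary there, any $\alpha \in \dot S^G \cap C$ is witnessed in $T$ by some $q \in G$ below $p$, so $T \cap C \neq \emptyset$. Apply $\SR(\lambda^+)^V$ to $T$ to pick $\beta < \lambda^+$ with $\cf^V(\beta) = \lambda$ such that $T \cap \beta$ is stationary in $\beta$.

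The heart of the argument is a sub-club extraction at $\beta$. Since $p$ forces $\dot S \cap \beta$ non-stationary in $\beta$, the mixing/maximality principle furnishes a $\Q$-name $\dot C_\beta$ with $p \Vdash$ ``$\dot C_\beta$ is a club in $\beta$ disjoint from $\dot S$''. For each $\alpha < \beta$, use $\lambda$-cc to pick a maximal antichain $A_\alpha$ below $p$ whose members decide $\min(\dot C_\beta \setminus \alpha)$; then $|A_\alpha| < \lambda$, and because $\cf^V(\beta) = \lambda$ the value $f(\alpha) = \sup\{\gamma : \exists r \in A_\alpha,\ r \Vdash \min(\dot C_\beta \setminus \alpha) = \gamma\}$ stays below $\beta$. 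Let $E_\beta \in V$ be the club in $\beta$ consisting of limit ordinals closed under $f$. For each $\alpha \in E_\beta$, $p$ forces $\dot C_\beta$ to be cofinal in $\alpha$, and as $p$ also forces $\dot C_\beta$ closed, $p \Vdash \alpha \in \dot C_\beta$. Thus $p \Vdash E_\beta \subseteq \dot C_\beta$, and so $p \Vdash E_\beta \cap \dot S = \emptyset$.

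The contradiction is now immediate: for every $\alpha \in E_\beta$, no $q \le p$ can force $\alpha \in \dot S$ (else its common extension with $p$ would force both $\alpha \in \dot S$ and $\alpha \notin \dot S$), so $E_\beta \cap T = \emptyset$; but $T \cap \beta$ is stationary in $\beta$ and $E_\beta$ is a club in $\beta$, a contradiction. I expect the sub-club extraction to be the main technical step. The small but important subtlety is being able to pick $\dot C_\beta$ \emph{directly below $p$}, so that ``$p \Vdash \alpha \notin \dot S$'' rules out \emph{all} extensions of $p$; this is arranged by assuming at the outset that $p$ itself forces the full failure of reflection, rather than working with an arbitrary further extension.
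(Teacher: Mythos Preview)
Your proof is correct and follows essentially the same strategy as the paper: define the ground-model ``trace'' $T$ (the paper calls it $U_{p_0}$), show it is stationary, reflect it at some $\beta$ of cofinality $\lambda$, and then derive a contradiction by producing a ground-model club in $\beta$ that $p$ forces to be disjoint from $\dot S$. The only difference is cosmetic: to obtain that ground-model club, the paper takes a maximal antichain $A$ below $p$ of conditions each forcing $\dot S$ disjoint from some ground-model club $D_q\subseteq\beta$, and sets $C=\bigcap_{q\in A}D_q$ (a club since $|A|<\lambda=\cf(\beta)$), whereas you mix to a single name $\dot C_\beta$ and take closure points of the bounding function $f$. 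Both are standard routes to the same fact that a $\lambda$-cc forcing adds no new clubs (up to containment) in ordinals of cofinality $\lambda$.
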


\begin{proof}
Suppose for contradiction there are $p_0 \in \Q$ and $\dot{S}$ such that $p_0$ forces that $\dot{S}$ is a non-reflecting stationary subset of $\lambda^+ \cap \cof(<\lambda)$. Set \beq \label{U} U_{p_0} = \set{\gamma \in \lambda^+ \cap \cof(<\lambda)}{\exists p \le p_0 \; p \Vdash \gamma \in \dot{S}}.\eeq $U_{p_0}$ is a stationary set: for every club $C \sub \lambda^+$, $p_0$ forces  $C \cap \dot{S} \neq \emptyset$, and because $p_0$ also forces $\dot{S} \sub U_{p_0}$, it forces $C \cap U_{p_0} \neq \emptyset$, which is equivalent to $C \cap U_{p_0}$ being non-empty in $V$.  By $\SR(\lambda^+)$ there is some $\alpha < \lambda^+$ of cofinality $\lambda$ such that \beq \label{s:1} \mbox{$U_{p_0} \cap \alpha$ is stationary}.\eeq By our assumption \beq \label{s:2}p_0 \Vdash \dot{S} \cap \alpha \mbox{ is non-stationary}.\eeq We will argue that (\ref{s:1}) and (\ref{s:2}) are contradictory, which will finish the proof.

First recall that by the $\lambda$-cc of $\Q$, every club subset of an ordinal $\alpha$ of cofinality $\lambda$ in $V[\Q]$ contains a club in the ground model. It follows by (\ref{s:2}) that there is a maximal antichain $A$ below $p_0$ such that for every $p \in A$ there is some club $D$ in $\alpha$ in the ground model with $p \Vdash \dot{S} \cap D = \emptyset$. Let us fix for each $p \in A$ some $D_p$ such that $p \Vdash \dot{S} \cap D_p = \emptyset$.

Set \beq C = \bigcap \set{D_p}{p \in A}.\eeq $C$ is a club subset of $\alpha$ because $A$ has size $<\lambda$ and $\alpha$ has cofinality $\lambda$. It holds \beq \label{alt} p_0 \Vdash \dot{S} \cap C = \emptyset \eeq because conditions forcing $\dot{S} \cap C = \emptyset$ are dense below $p_0$: for every $q \le p_0$ there is some $p \in A$ which is compatible with $q$, and any $r \le p,q$ forces $\dot{S} \cap D_p = \emptyset$. Since $C \sub D_p$, this implies $r \le q$ forces $\dot{S} \cap C = \emptyset$.

However, by (\ref{s:1}) there must be  $\gamma \in C \cap U_{p_0} \cap \alpha$, and therefore some $p \le p_0$ such that $p \Vdash \gamma \in \dot{S} \cap C$. This contradicts (\ref{alt}).
\end{proof}

Let us add a note of general interest. For $S \sub \lambda^+$, let $\Ref(S)$ denote the set of points of cofinality $\lambda$ on which $S$ reflects. A little analysis of the proof of Theorem \ref{th:sr} shows that for every $\alpha \in \Ref(U_{p_0})$, \beq \label{not} p_0 \not \Vdash \dot{S} \cap \alpha \mbox{ is non-stationary.}\eeq This is sufficient to argue that $\Q$ preserves $\SR(\lambda^+)$, but it does not seem in general sufficient for preservation of some stronger forms of stationary reflection. Recall that \emph{club stationary reflection} at $\lambda^+$, $\CSR(\lambda^+)$, says that for every stationary $S \sub \lambda^+ \cap \cof(<\lambda)$ there is a club $C \sub \lambda^+$ such that $C \cap \cof(\lambda) \sub \Ref(S)$ (we say that $\Ref(S)$ contains a $\lambda$-club). See \cite{m:sr} and \cite{JS:full} for more information about this concept. With more care, we can show something about preservation of $\CSR(\lambda^+)$ as well.

\begin{theorem}\label{th:csr}
Suppose $\lambda$ is a regular cardinal and $\CSR(\lambda^+)$ holds. Then the following hold:
\bce[(i)]
\item If $\Q$ is a $\lambda$-cc forcing with a dense subset $D$ of size $\le \lambda$, then $\Q$ preserves $\CSR(\lambda^+)$.
\item Suppose $\lambda = \kappa^+$ and $\kappa^{<\kappa} = \kappa$. Then $\Add(\kappa,\alpha)$ preserves $\CSR(\lambda^+)$ for any $\alpha$.
\item Suppose $\lambda = \kappa^+$ and $\kappa$ is measurable ($2^\kappa$ can have any value). Then $\Q_U$, the Prikry forcing with respect to a normal measure $U$ on $\kappa$, preserves $\CSR(\lambda^+)$.\footnote{The proof of (iii) was suggested to us by Menachem Magidor.}
\ece
\end{theorem}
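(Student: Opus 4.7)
For part (i), the plan is to adapt the proof of Theorem \ref{th:sr} by running it simultaneously for every condition in $D$ below $p_0$ rather than only for $p_0$. Fix $p_0 \in \Q$ forcing that $\dot{S}$ is a stationary subset of $\lambda^+ \cap \cof(<\lambda)$ and, for each $p \in D$ with $p \le p_0$, define $U_p = \{\gamma \in \lambda^+ \cap \cof(<\lambda) : \exists q \le p, \, q \Vdash \gamma \in \dot{S}\}$. The density argument from the proof of Theorem \ref{th:sr} gives that each $U_p$ is stationary in $V$, so by $\CSR(\lambda^+)$ there is a club $C_p \sub \lambda^+$ with $C_p \cap \cof(\lambda) \sub \Ref(U_p)$. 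Let $C = \bigcap\{C_p : p \in D, \, p \le p_0\}$; since $|D| \le \lambda$ and the club filter on $\lambda^+$ is $\lambda^+$-complete, $C$ is a club. To show $p_0 \Vdash C \cap \cof(\lambda) \sub \Ref(\dot{S})$, fix $\alpha \in C \cap \cof(\lambda)$; any $q \le p_0$ forcing $\dot{S} \cap \alpha$ non-stationary can be strengthened to some $q' \in D$, and the antichain construction of Theorem \ref{th:sr} produces a ground-model club $E \sub \alpha$ with $q' \Vdash \dot{S} \cap E = \emptyset$. But $\alpha \in C \sub C_{q'}$ forces $U_{q'} \cap \alpha$ stationary, so some $\gamma \in U_{q'} \cap E$ gives an extension of $q'$ forcing $\gamma \in \dot{S}$, contradicting $q' \Vdash \dot{S} \cap E = \emptyset$.

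For part (ii), the plan is a factorization argument. Since $\kappa^{<\kappa} = \kappa$, $\Add(\kappa, \alpha)$ is $\kappa^+$-Knaster. For a name $\dot{S}$ of a stationary subset of $\lambda^+$, by the $\kappa^+$-cc each bit ``$\gamma \in \dot{S}$'' is decided by an antichain living on at most $\kappa$ coordinates, so $\dot{S}$ is captured by some $\Add(\kappa, Y)$ with $|Y| \le \lambda^+$. If $|Y| \le \lambda$, then $|\Add(\kappa, Y)| \le \lambda$ and part (i) applies directly, yielding a $\lambda$-club of reflection points already in $V[\Add(\kappa, Y)]$; writing $V[\Add(\kappa, \alpha)] \cong V[\Add(\kappa, Y)][\Add(\kappa, \alpha \setminus Y)]$, this club transfers to $V[\Add(\kappa, \alpha)]$ because the complementary factor is $\kappa^+$-cc (hence preserves cofinalities $\ge \lambda$ and stationarity of ground-model stationary subsets of points of cofinality $\lambda$). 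When $|Y| = \lambda^+$ the reduction is more involved: I would decompose $Y$ into $\lambda^+$ pieces of size $\le \lambda$ and iterate (i) through the resulting product, leaning on $\kappa^+$-Knaster to control limit stages. I expect this iteration at $\lambda^+$-many stages to be the main technical obstacle in (ii).

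For part (iii), the plan exploits the Prikry property together with the fact that there are at most $\kappa$ many stems. Fix $p_0 = (s_0, A_0) \Vdash \dot{S}$ stationary and, for each stem $t \sqsupseteq s_0$ with $t \setminus s_0 \sub A_0$, set $U_t = \{\gamma : \exists A \in U, \, (t, A) \Vdash \gamma \in \dot{S}\}$; compatibility of same-stem conditions makes $U_t$ well-defined. The key sub-claim is: for every $(t^*, B^*) \le p_0$ forcing $\dot{S}$ stationary, some $t \sqsupseteq t^*$ with $t \setminus t^* \sub B^*$ has $U_t$ stationary in $V$. Otherwise, pick a witnessing club $E_t$ disjoint from each such $U_t$ and intersect over the at-most-$\kappa$ such stems to obtain a ground-model club $E^*$; then $(t^*, B^*) \Vdash \dot{S} \cap E^* = \emptyset$, because if $(u, D) \le (t^*, B^*)$ forced $\gamma \in \dot{S}$ with $\gamma \in E^*$, we would have $\gamma \in U_u \cap E_u = \emptyset$ -- contradicting stationarity of $\dot{S}$. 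Applying $\CSR(\lambda^+)$ to each stationary $U_t$ yields clubs $C_t$, and $C = \bigcap C_t$ is a club (at most $\kappa$ stems). Since $\cof^V(\lambda) = \cof^{V[\Q_U]}(\lambda)$ on $\lambda^+$ by the $\kappa^+$-cc, to verify $p_0 \Vdash C \cap \cof(\lambda) \sub \Ref(\dot{S})$ we fix $\alpha \in C \cap \cof(\lambda)$ and suppose some $(t^*, B^*) \in G$ below $p_0$ forces $\dot{S} \cap \alpha$ non-stationary; as in (i), extract a ground-model club $E \sub \alpha$ with $(t^*, B^*) \Vdash \dot{S} \cap E = \emptyset$, then use the sub-claim to pick $t'' \sqsupseteq t^*$ with $t'' \setminus t^* \sub B^*$ and $U_{t''}$ stationary, so $\alpha \in C \sub C_{t''}$ makes $U_{t''} \cap \alpha$ stationary. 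Picking $\gamma \in U_{t''} \cap E$ with witness $(t'', A) \Vdash \gamma \in \dot{S}$, the condition $(t'', A \cap B^*)$ extends both $(t'', A)$ and $(t^*, B^*)$ -- here the inclusion $t'' \setminus t^* \sub B^*$ is exactly what is needed for compatibility -- and forces both $\gamma \in \dot{S}$ and $\dot{S} \cap E = \emptyset$, a contradiction since $\gamma \in E$. The main obstacle relative to (i) is precisely this compatibility requirement, which is why the argument is organized around stems $t''$ satisfying $t'' \setminus t^* \sub B^*$ rather than arbitrary conditions below $(t^*, B^*)$.
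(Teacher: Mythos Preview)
Your argument for (i) is essentially the paper's: intersect the $\lambda$-clubs coming from $\CSR$ applied to the $U_p$'s for $p \in D$, then run the contradiction of Theorem~\ref{th:sr} below any offending condition.

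For (ii) there is a genuine gap. Your reduction is fine as far as it goes: a name $\dot S$ for a subset of $\lambda^+$ lives on some $Y$ with $|Y|\le\lambda^+$, the case $|Y|\le\lambda$ is covered by (i), and the complementary Cohen factor preserves a ground-model $\lambda$-club of reflection points. What remains is exactly the case $|Y|=\lambda^+$, i.e.\ showing that $\Add(\kappa,\lambda^+)$ itself preserves $\CSR(\lambda^+)$, and here your proposed ``iterate (i) through $\lambda^+$ pieces'' does not obviously go through: $\CSR$ is not a property that automatically persists to the union of an increasing chain of models, so the limit stages of such an iteration are a real problem, not merely a technicality. The paper avoids this entirely. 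It enumerates $\Add(\kappa,\lambda^+)$ as $\langle p_\alpha:\alpha<\lambda^+\rangle$, takes for each $\alpha$ a $\lambda$-club $C_\alpha\sub\Ref(U_{p_\alpha})$ with the additional closure property that below any $\gamma\in C_\alpha$ of cofinality $\lambda$, witnesses for membership in $\dot S\cap\gamma$ can be found with domain $\sub\gamma$, and then forms the \emph{diagonal} intersection $C=C_\Q\cap\triangle_\alpha C_\alpha$ (together with a club $C_\Q$ of closure points of the enumeration). The point is that at $\gamma\in C$, any condition restricts to some $p_\delta$ with $\delta<\gamma$, so $\gamma\in C_\delta$, and the domain-restriction clause lets one build the compatible witness. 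This diagonal-intersection-plus-support argument is the idea your outline is missing.

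Your argument for (iii) is correct and genuinely different from the paper's. The paper fixes a single stem $s$ with $U=U_s$ stationary, defines $T_t=\{\alpha\in U: t\setminus s\sub A_\alpha\}$, and uses Rowbottom's theorem to find $A^{\mathrm{hom}}\in U$ such that \emph{every} $t$ with $t\setminus s\sub A^{\mathrm{hom}}$ has $T_t$ stationary; then the single condition $(s,A^{\mathrm{hom}})$ forces reflection at every point of the resulting $\lambda$-club. You instead prove the density sub-claim that below any $(t^*,B^*)\le p_0$ there is a stem $t''$ with $t''\setminus t^*\sub B^*$ and $U_{t''}$ stationary, and use this to derive a contradiction below any condition purporting to force non-reflection at a point of the club $C=\bigcap\{C_t:U_t\text{ stationary}\}$. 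Your route is more elementary (no partition theorem) and shows directly that $p_0$ forces the $\lambda$-club of reflection; the paper's route produces a single explicit condition with a canonical measure-one set doing the job.
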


\begin{proof}
(i). Suppose for simplicity that $1_\Q$ forces that $\dot{S}$ is a stationary subset of $\lambda^+$. For every $p \in D$, consider $U_p$ defined as in (\ref{U}). By our assumption every $\Ref(U_p)$ contains a $\lambda$-club, and therefore \beq \bigcap_{p \in D}\Ref(U_p) \mbox{ contains a $\lambda$-club $C$.}\eeq We will show that $1_\Q$ forces that $\dot{S}$ reflects on every point in $C$. Let $\gamma$ be in $C$ and suppose for contradiction that there is $p$ which forces $\dot{S} \cap \gamma$ is disjoint from some club $C^*$ in $\gamma$. Choose $p^* \le p$ in $D$: then $p^*$ does not force that $\dot{S} \cap \gamma$ is non-stationary by (\ref{not}) because $\gamma \in \Ref(U_{p^*})$, but it also forces that it avoids $C^*$, contradiction.

(ii). By the homogeneity of Cohen forcing, it suffices to show that $\Q = \Add(\kappa,\lambda^+)$ preserves $\CSR(\lambda^+)$. Suppose for simplicity that $1_\Q$ forces that $\dot{S}$ is a stationary subset of $\lambda^+$. Let $\seq{p_\alpha}{\alpha< \lambda^+}$ be some enumeration of $\Q$. Then there is a club $C_\Q$ in $\lambda^+$ such that for every $\gamma \in C_\Q$ of cofinality $\lambda$, every condition with its domain included in $\gamma$ appears as $p_\delta$ for some $\delta<\gamma$. For each $\alpha<\lambda^+$, let $C_\alpha$ denote a $\lambda$-club contained in $\Ref(U_{p_\alpha})$ with the property that 

$(*)$ for every $\gamma \in C_\alpha$ of cofinality $\lambda$, if some condition $p \le p_\alpha$ forces $\xi \in \dot{S} \cap \gamma$, then there is some $p' \le p_\alpha$ which forces $\xi \in \dot{S} \cap \gamma$ such that $\dom{p'} \sub \gamma$.

By our assumption \beq C= C_\Q \cap \triangle_{\alpha <\lambda^+}C_\alpha \mbox{ is a $\lambda$-club,}\eeq where $\triangle$ denotes the diagonal intersection. We show that $1_\Q$ forces that $\dot{S}$ reflects on every element in $C$. Let $\gamma \in C$ be fixed. Suppose for contradiction that there is $p$ which forces that $\dot{S}$ is disjoint from some club $C^*$ in $\gamma$. Then $p$ restricted to $\gamma$ appears as $p_\delta$ for some $\delta < \gamma$, and there are $p' \le p_\delta$ and $\xi < \gamma$ with $p' \Vdash \xi \in \dot{S} \cap C^*$ (because $\gamma \in \Ref(U_{p_\delta})$). By $(*)$ we can assume that $p'$ has its domain included in $\gamma$, and so $p' \cup p$ is a valid condition extending $p$ which is a contradiction.

(iii) Let $\Q_U$ denote the Prikry forcing defined with respect to some normal measure $U$ on $\kappa$. Suppose $1_{\Q_U}$ forces that $\dot{S}$ is a stationary subset of $\kappa^{++}\cap \cof(<\kappa^+)$. We can assume that there is a fixed stem $s$ such that \beq U = \set{\alpha < \kappa^{++}}{\exists A_\alpha \; (s,A_\alpha) \Vdash \alpha \in \dot{S}}\eeq is stationary. For an and-extension $t \sqsupseteq s$, let us define \beq T_t = \set{\alpha \in U}{t \setminus s \sub A_\alpha}.\eeq We say that $T_t$ is \emph{good} if $T_t$ is stationary. Since there are only $\kappa$-many $t$'s, there must be some good $T_t$. We need the following: 

$(*)$ There exists a set $\Ah \in U$ such that if $t \sqsupseteq s$ and $t \setminus s \sub \Ah$, then $T_t$ is good.

Let us prove $(*)$. Denote $B = \set{\alpha < \kappa}{\mx{max}(s) < \alpha} \in U$. By Rowbottom's theorem, there is a homogeneous set $\Ah \in U$ for the partition of $[B]^{<\omega}$ into two colors: (a)  $x$ gets color 0 if $T_{s \cup x}$ is good, and (b) $x$ gets color $1$ if $T_{s \cup x}$ is not good. We want to show that $\Ah$ is homogeneous in color $0$. Suppose for contradiction that $\Ah$ is homogenous in color 1. First note that \begin{equation} U = \set{\alpha <\kappa^{++}}{(s,\Ah \cap A_\alpha) \Vdash \alpha \in \dot{S}}.\end{equation} For $t \sqsupseteq s$, let $T^*_t = \set{\alpha \in U}{t \setminus s \sub \Ah \cap A_\alpha}$ so that $T^*_t \sub T_t$. Since there are only $\kappa$-many such $t$'s and $U$ is stationary, there must be some $t$ such that $T^*_t$ is stationary. Let us choose such $t$. Since we assume that $\Ah$ is homogeneous in color $1$ and $t \setminus s \sub \Ah$, $T_t$ must be non-stationary. This contradicts our choice of $t$.

With $(*)$ we finish the argument as follows. By $\CSR(\kappa^{++})$, we can choose a $\kappa^+$-club $D$ on which every good $T_t$ reflects. We wish to argue that for every $\gamma \in D$, \begin{equation} (s,\Ah) \Vdash \dot{S} \cap \gamma \mbox{ is stationary}.\end{equation} Suppose this is not the case for some $\gamma \in D$. Then there is a club $C^* \sub \gamma$ and $(t,A) \le (s,\Ah)$ which forces that $C^* \cap \dot{S}$ is empty. By the homogeneity of $\Ah$, $T_t$ is good. By the choice of $D$, there is some $\xi < \gamma$ in $T_t \cap C^*$. Since $T_t \sub U$, $(s,A_\xi) \Vdash \xi \in \dot{S}$. By the definition of $T_t$, $t\setminus s \sub A_\xi$, and therefore $(t,A \cap A_\xi)$ is a condition which extends both $(s,A_\xi)$ and $(t,A)$ and forces $C^* \cap \dot{S}$ is non-empty. This is a contradiction.
\end{proof}

We think it is plausible that arguments as in Theorem \ref{th:csr} can be used to argue that it is consistent to have $\CSR(\kappa^{++})$ for a singular strong limit $\kappa$ with a small $\uu$. However, we do not know if a full analogue of Theorem \ref{th:sr} holds for $\CSR(\lambda^+)$.

\brm
Some preservation theorems for stationary reflection were known before: it was known that the Prikry-style forcings at $\kappa$ preserve $\SR(\kappa^{++})$ due to their Prikry property (see for instance \cite{8fold}) and that in general $\kappa^+$-cc forcings of size $<\kappa^{++}$ preserve $\SR(\kappa^{++})$ and the Cohen forcing at $\omega$ of any length preserves $\SR(\omega_2)$ (attributed to Neeman in \cite{KG8}).
\erm

\begin{lemma}\label{sr}
$\SR(\kappa^{++})$ holds in $V[\R]$.
\end{lemma}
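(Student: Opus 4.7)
The plan is essentially dictated by the remark preceding the lemma: reduce to showing (a) $\SR(\kappa^{++})$ holds in $V[\P^*_\lambda]$, and (b) the tail forcing is $\kappa^+$-cc and hence preserves $\SR(\kappa^{++})$ by Theorem \ref{th:sr}.

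First, I would factor $\R$. Since $U \in V[\P_\delta]$, the Prikry forcing $\Q_U$ lives already in $V[\P_\lambda]$, so (just as in the proof of Lemma \ref{tp}) one fixes a $\P^*_\lambda$-name $\dot{\Q}$ for $\P_{[\lambda,\delta)} * \dot{\Q}_U$ and has the equivalence
\[\R \;=\; \P^*_\delta * \dot{\Q}_U \;\equiv\; \P^*_\lambda * \dot{\Q}.\]
The tail $\dot{\Q}$ is forced to be $\kappa^+$-cc over $V[\P^*_\lambda]$: $\P_{[\lambda,\delta)}$ is $\kappa^+$-Knaster in $V$ (and remains so in $V[\P^*_\lambda]$ by an Easton-style argument via the projection in (\ref{sigma})), and Prikry forcing is $\kappa$-centered, so the two-step iteration is $\kappa^+$-cc in $V[\P^*_\lambda]$.

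Second, I would argue that $\SR(\kappa^{++})$ holds in $V[\P^*_\lambda]$, using that $\Ups = \lambda$ is weakly compact in $V$ and becomes $\kappa^{++}$ after forcing with $\P^*_\lambda$. This is the standard Mitchell-style reflection: given a stationary $\dot{S} \subseteq \lambda \cap \cof(<\kappa^+)$ in $V[\P^*_\lambda]$, pick a weakly compact embedding $j : M \to N$ with critical point $\lambda$ containing all relevant parameters, and lift $j$ to $j : M[G] \to N[G * G^*]$ where $G$ is $\P^*_\lambda$-generic and $G^*$ is generic for the quotient $j(\P^*_\lambda)/G$. The lift exists because, via the product decomposition (\ref{sigma}) applied to $j(\P^*_\lambda)$, the quotient projects from a product of a $\kappa^+$-cc part (taken care of by genericity of $G$ itself) and a $\kappa^+$-closed term-forcing part in $N[G]$ (whose generic is built in $N[G]$ by closure and the fact that $N$ has only $\lambda$-many dense sets in $N[G]$ to meet). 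Standard elementarity then says $j(\dot{S}) \cap \lambda = \dot{S}$ is stationary below $j(\lambda)$ at a point of $N$-cofinality $\kappa^+$, and since being stationary and of cofinality $\kappa^+$ is absolute between $N[G * G^*]$ and $V[\P^*_\lambda]$ (the former is closed under $\kappa^+$-sequences modulo the usual arguments), we conclude that $\dot{S}$ reflects at $\lambda$-many points below $\lambda$.

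Third, I would invoke Theorem \ref{th:sr} with its $\lambda$ set equal to $\kappa^+$: $\SR((\kappa^+)^+) = \SR(\kappa^{++})$ holds in $V[\P^*_\lambda]$ and $\dot{\Q}$ is $\kappa^+$-cc there, so $\SR(\kappa^{++})$ holds in $V[\P^*_\lambda * \dot{\Q}] = V[\R]$. The only non-routine ingredient is the weakly compact argument of step two; steps one and three are bookkeeping, with the chain condition of $\dot{\Q}$ being the easiest item to verify.
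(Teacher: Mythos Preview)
Your proposal is correct and follows the same approach as the paper: show $\SR(\kappa^{++})$ in $V[\P^*_\lambda]$, then apply Theorem~\ref{th:sr} to the $\kappa^+$-cc tail $\P_{[\lambda,\delta)}*\dot{\Q}_U$. The paper's own proof is a single sentence citing exactly these two ingredients, treating the first as ``standard''; you have simply filled in the weakly compact lifting argument that the paper omits. Two minor inaccuracies worth cleaning up: $\Q_U$ lives in $V[\P_\delta]$, not $V[\P_\lambda]$ (it is the two-step $\P_{[\lambda,\delta)}*\dot{\Q}_U$ that has a $\P_\lambda$-name), and $\P_{[\lambda,\delta)}$ is a quotient living in $V[\P_\lambda]$ rather than in $V$; but as the paper's remark preceding the lemma stresses, for stationary reflection none of this matters --- Theorem~\ref{th:sr} only needs the tail to be $\kappa^+$-cc over $V[\P^*_\lambda]$, regardless of where it is definable.
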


\begin{proof}
This follows from Theorem \ref{th:sr} and the fact that $\SR(\kappa^{++})$ holds in $V[\P^*_\lambda]$.
\end{proof}

\subsection{Failure of approachability}\label{sec:ap}

Let us first argue that $\neg \AP(\kappa^{++})$ holds in $V[\P^*_\delta]$.

\begin{lemma}\label{ap-delta}
$\neg\AP(\kappa^{++})$ holds in $V[\P^*_\delta]$.
\end{lemma}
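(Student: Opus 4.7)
The plan is to adapt the standard Mitchell-model argument for failure of approachability, lifting a weakly compact embedding with critical point $\lambda = \Upsilon$ through $\P^*_\delta$. I would first verify that $\P^*_\delta$ is $\lambda$-cc: the first coordinate $\P_\delta$ is $\kappa^+$-Knaster by the results of Garti--Shelah reviewed in Section \ref{sec:review}, and a standard $\Delta$-system argument on the Mitchell coordinate (whose domain lies among the successor cardinals below $\lambda$), combined with the weak compactness, hence inaccessibility, of $\lambda$ in $V$, rules out antichains of size $\lambda$ in $\P^*_\delta$. Consequently, any $\P^*_\delta$-name for an approachability sequence $\bar{a}$ and a witnessing club $C$ admits nice representatives in $H(\lambda^+)^V$.

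Assume toward contradiction that $\AP(\lambda)$ holds in $V[\P^*_\delta]$, witnessed by such $\dot{\bar{a}}$ and $\dot C$, and let $\dot S$ be a name for the set of $\gamma \in \dot C$ of cofinality $\kappa^+$ which are approachable with respect to $\dot{\bar{a}}$; $\dot S$ is forced to be stationary. Using the weak compactness of $\lambda$ in $V$, I would pick a transitive model $M$ of size $\lambda$ closed under $<\lambda$-sequences containing $\P^*_\delta, \dot{\bar{a}}, \dot C, \dot S$, together with an elementary embedding $j\colon M \to N$ with $\mathrm{crit}(j) = \lambda$, arranged by the standard reflection argument so that $\lambda$ ends up in the $j$-image of $\dot S$ after lifting.

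The core step is to lift $j$ through $\P^*_\delta$. Applying the projection analysis (\ref{sigma*}) inside $N$, the quotient $j(\P^*_\delta)/G$ (where $G$ is the $\P^*_\delta$-generic) decomposes via a projection from the product of a $\kappa^+$-cc piece and a $\kappa^+$-closed term forcing $\T_{\lambda,j(\delta)}^N$. Exploiting the $\kappa^+$-closure of the term part, one builds a master condition below $j''G_T$ (the image of the term-forcing part of $G$), and combined with a generic for the cc piece this yields a generic $G^*$ for $j(\P^*_\delta)$ over $N$ with $G^* \supseteq j''G$. The embedding lifts to $j^*\colon M[G] \to N[G^*]$.

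By elementarity and the choice of $j$, $\lambda \in j^*(S)$ in $N[G^*]$, so $\lambda$ is approachable there with respect to $j^*(\bar{a})$ (whose restriction below $\lambda$ equals $\bar{a}$). This produces an $A \subseteq \lambda$ unbounded of ordertype $\cf^{N[G^*]}(\lambda) = \kappa^+$ with every initial segment $A \cap \beta$ equal to some $a_\alpha$, $\alpha < \lambda$. The main obstacle, where the argument is most delicate, is to show no such $A$ can exist in $N[G^*]$. The idea is to view $A$ as a cofinal branch through the tree of coherent bounded initial segments extracted from $\bar{a}$, which already lives in $V[G]$ and has no cofinal branch there, since $\cf^{V[G]}(\lambda) = \kappa^{++}$ while $A$ has ordertype $\kappa^+$. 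Applying the branch-preservation Facts \ref{Kunen} and \ref{f:spencer} to the two-factor decomposition of $j(\P^*_\delta)/G$, one argues that neither the $\kappa^+$-cc factor nor the $\kappa^+$-closed factor can add the required branch, yielding the contradiction that completes the proof.
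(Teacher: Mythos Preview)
Your endgame has a genuine gap. The ``tree of coherent bounded initial segments extracted from $\bar a$'' is a tree of height $\kappa^+$ (the level of a node $a_\alpha$ is its ordertype), not a $\lambda$-tree, so Fact~\ref{f:spencer} does not apply as stated. More importantly, this tree is \emph{not} branchless in $V[G]$: every approachable $\gamma<\lambda$ of cofinality $\kappa^+$ (and $S$ is stationary, so there are many) already contributes a cofinal branch, so Fact~\ref{Kunen} is unavailable as well. The underlying obstruction is that the bare $\kappa^+$-cc $\times$ $\kappa^+$-closed decomposition simply cannot rule out a fresh cofinal $\kappa^+$-sequence in $\lambda$: the $\kappa^+$-closed term factor $\T_{\lambda,j(\delta)}^N$ does collapse $\lambda$ to $\kappa^+$ over $N[G]$, and any resulting cofinal $A\subseteq\lambda$ of ordertype $\kappa^+$ automatically has all its initial segments (each of size $\le\kappa$) in $N[G]$ by closure. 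So the contradiction you are aiming for does not materialize from the product analysis alone.

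The paper's argument (following \cite{8fold}) is genuinely different and uses a finer structural fact that you never isolate: because the collapsing coordinate of $\P^*_\delta$ is supported only on \emph{successor} cardinals (clause~(\ref{dom})), at every inaccessible $\alpha<\lambda$ the quotient $\P^*_\delta/G_\alpha$ factors as $\P_{[\alpha,\alpha^+)} * \P^*_{[\alpha^+,\delta)}$, where the first factor adds fresh subsets of $\kappa$ while keeping $\alpha$ regular (no collapse lives in the interval $[\alpha,\alpha^+)$). It is precisely this ``gap'' --- a nontrivial stretch of the iteration with no collapsing --- that drives the non-approachability argument in \cite{8fold}; your generic cc $\times$ closed projection loses this information.
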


\begin{proof}
This is like the argument from \cite{8fold} with $\P_\delta$ instead of $\Add(\kappa,\delta)$. Clause (\ref{dom}) from Definition \ref{def:Pbasic-s} ensures that for every inaccessible $\alpha<\lambda$, the quotient forcing $\P^*_\delta/G_\alpha$, where $G_\alpha$ is $\P^*_\alpha$-generic, can be written as \beq \P_{[\alpha,\alpha^+)} * \P^*_{[\alpha^+,\delta)},\eeq where $\P_{[\alpha,\alpha^+)}$ adds new fresh subsets of $\kappa$ without introducing new collapses, which makes it possible to argue for the non-approachability (see \cite{8fold} for more details).
\end{proof}

We will use a theorem from \cite[Corollary 2.2.]{GK:a} to argue that the Prikry forcing $\Q_U$ preserves $\neg \AP(\kappa^{++})$ over $V[\P^*_\delta]$. We phrase the theorem in a way which is suitable for us (Gitik and Krueger use a different indexation for $\AP$):

\begin{theorem}[\cite{GK:a}]\label{th:Gitik}
Assume $\neg \AP(\kappa^{++})$ holds and $\Q$ is $\kappa$-centered. Then the forcing $\Q$ forces $\neg \AP(\kappa^{++})$.
\end{theorem}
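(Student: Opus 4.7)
The plan is to argue by contrapositive: assume $\AP(\kappa^{++})$ holds in $V[\Q]$ and extract a ground-model witness. Fix a condition $p_0$ forcing that $(\dot{\bar{a}}, \dot C)$ witnesses $\AP(\kappa^{++})$, where $\dot{\bar{a}} = \seq{\dot a_\alpha}{\alpha<\kappa^{++}}$ names a sequence of bounded subsets of $\kappa^{++}$ and $\dot C$ names a club. Since any $\kappa$-centered forcing is $\kappa^+$-cc (two conditions in the same directed piece have a common extension, so antichains hit each piece in at most one element), $\dot C$ contains a ground-model club $D \sub \kappa^{++}$ relative to $p_0$, and each $\dot a_\alpha$ is decided below $p_0$ by a maximal antichain of size $\le \kappa$; collecting all decided values across all $\alpha < \kappa^{++}$ produces a sequence $\bar{b} = \seq{b_\eta}{\eta<\kappa^{++}}$ in $V$ of bounded subsets of $\kappa^{++}$ which covers every possible value of every $\dot a_\alpha$ in every generic extension.

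The main step is to show that a club of $\gamma \in D$ with $\cf^V(\gamma)<\kappa^{++}$ are approachable in $V$ with respect to (a suitable reindexing of) $\bar{b}$. Fix such a $\gamma$ and suppose $p_0$ forces $\gamma$ to be approachable via $\dot A \sub \gamma$ with $\dot A \cap \beta = \dot a_{\dot\alpha_\beta}$ for every $\beta<\gamma$. Here the decomposition $\Q = \bigcup_{i<\kappa} \Q_i$ into directed pieces enters crucially: within a single center $\Q_i$, any two conditions which decide $\dot\alpha_\beta$ must agree on the decided value (they admit a common extension in $\Q_i$), so the function $\beta \mapsto \alpha^i_\beta$ defined by any such $\Q_i$-decision is well defined in $V$ on the set of $\beta$ admitting a $\Q_i$-decision below $p_0$. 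A pigeonhole over the $\kappa$ centers --- feasible when $\cf^V(\gamma)>\kappa$ --- isolates an index $i^{*}$ for which this partial function is defined on an unbounded subset of $\gamma$; the corresponding $\Q_{i^{*}}$-decided initial segments of $\dot a_{\alpha^{i^{*}}_\beta}$ then patch together into an unbounded $A^{*}\sub\gamma$ in $V$ whose proper initial segments all lie in $\restr{\bar{b}}{\gamma}$, witnessing approachability of $\gamma$ in $V$.

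The main obstacle is this extraction step. One must verify that the value of $\dot A\cap\beta$ decided by $\Q_{i^{*}}$ actually coincides with the extension-value, not merely a subset of it: every $\xi\in\dot A\cap\beta$ in the generic must already be forced in by some condition in $\Q_{i^{*}}$, which is non-obvious because $\dot a_{\alpha^{i^{*}}_\beta}$ can a priori be further determined by conditions outside $\Q_{i^{*}}$. Additionally, the case $\cf^V(\gamma)\le\kappa$ requires separate treatment, since the naive pigeonhole over centers fails; one expects that $\kappa$-centeredness still prevents the appearance of genuinely new short cofinal sequences in a strong enough sense to close this case, but the argument is not transparent. Presumably \cite{GK:a} handles both points uniformly via a master-condition or elementary-submodel argument, which I have not attempted to reconstruct in detail here.
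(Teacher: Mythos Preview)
The paper does not prove this theorem; it is cited from Gitik and Krueger \cite[Corollary~2.2]{GK:a} and used as a black box in Lemma~\ref{ap}. There is no in-paper argument to compare your sketch against.

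On the sketch itself: the contrapositive strategy and the use of centeredness (pairwise compatibility within a centre forces coherence of decided values) point in the right direction, but the write-up has gaps beyond the two you flag at the end. The opening claim that ``each $\dot a_\alpha$ is decided below $p_0$ by a maximal antichain of size $\le\kappa$'' is not justified: a bounded subset of $\kappa^{++}$ can have size $\kappa^+$, and even a size-$\kappa$ set need not be decided by any single condition in a $\kappa$-centered poset (which carries no closure). So $\bar b$ cannot simply be ``all decided values''; one has to build it from the centres, roughly as you begin to do in your second paragraph. Your two stated worries are then both genuine. The first (that the $\Q_{i^*}$-trace of $\dot A\cap\beta$ might miss elements forced in only by conditions outside $\Q_{i^*}$) is exactly where the argument needs care, and the second (the case $\cf^V(\gamma)\le\kappa$, where pigeonhole over $\kappa$ centres fails and $\Q$ may have changed the cofinality) is not a technicality --- Prikry forcing, the intended application, does precisely this. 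You are right that \cite{GK:a} resolves both, but as written your outline is a plausibility sketch with acknowledged holes, not a proof.
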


\begin{lemma}\label{ap}
$\neg\AP(\kappa^{++})$ holds in $V[\R]$.
\end{lemma}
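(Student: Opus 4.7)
The plan is to apply Theorem \ref{th:Gitik} directly, using Lemma \ref{ap-delta} as the base case and exhibiting the Prikry forcing $\Q_U$ as a $\kappa$-centered forcing over $V[\P^*_\delta]$. Since $\R = \P^*_\delta * \dot{\Q}_U$ by Definition \ref{def:P}, it suffices to verify the two hypotheses of Theorem \ref{th:Gitik} in $V[\P^*_\delta]$.

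First, Lemma \ref{ap-delta} yields that $\neg\AP(\kappa^{++})$ holds in $V[\P^*_\delta]$, which is the first hypothesis. Second, I would recall from the discussion in Section \ref{sec:Prk} that the Prikry forcing $\Q_U$ is $\kappa$-centered: any two conditions $(s,A),(s,B) \in \Q_U$ sharing a common stem $s$ are compatible, witnessed by $(s, A\cap B)$; since there are only $\kappa$-many stems (each being a finite subset of $\kappa$, and $\kappa$ is a strong limit in $V[\P^*_\delta]$), the partition of $\Q_U$ according to stems gives a witness to $\kappa$-centeredness in $V[\P^*_\delta]$.

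With both hypotheses verified in $V[\P^*_\delta]$, Theorem \ref{th:Gitik} applied there yields that $\Q_U$ forces $\neg\AP(\kappa^{++})$, i.e.\ $\neg\AP(\kappa^{++})$ holds in $V[\P^*_\delta][\Q_U] = V[\R]$, as required. There is essentially no obstacle here — the work has already been done in establishing Lemma \ref{ap-delta} (which in turn relied on clause (\ref{dom}) of Definition \ref{def:Pbasic-s} to run the quotient argument of \cite{8fold}), together with the centeredness preservation theorem of Gitik–Krueger. The only minor point worth mentioning is that $\kappa$ remains a strong limit (in fact measurable) in $V[\P^*_\delta]$, so the counting of stems goes through.
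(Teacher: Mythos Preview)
Your proof is correct and follows exactly the paper's approach: invoke Lemma \ref{ap-delta} for $\neg\AP(\kappa^{++})$ in $V[\P^*_\delta]$, note that $\Q_U$ is $\kappa$-centered there (as in Section \ref{sec:Prk}), and apply Theorem \ref{th:Gitik}. One tiny remark: you do not actually need $\kappa$ to be a strong limit to count stems, since $\kappa^{<\omega} = \kappa$ for any infinite cardinal.
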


\begin{proof}
This follows by Lemma \ref{ap-delta} and Theorem \ref{th:Gitik} because $\Q_U$ is $\kappa$-centered over $V[\P^*_\delta]$ (see Section \ref{sec:Prk} for details).
\end{proof}

This ends the proof of Theorem \ref{th:countable}.

\section{Uncountable cofinalities}\label{sec:uncountable}

In their paper \cite{GS:pol}, Garti and Shelah prove their theorem for the countable cofinality using the Prikry forcing $\Q_U$, but mention in \cite[Remark 3.2]{GS:pol} that the same argument holds when the Prikry forcing is replaced with the Magidor forcing $\Qm$ from \cite{M:cof}. Since the combinatorial argument in \cite[Theorem 1.4]{GS:small} (reviewed as Theorem \ref{GS:small:here} in our paper for cofinality $\omega$) for the small ultrafilter number is not specific for the countable cofinality, we get:

\begin{theorem}\label{th:unc}
Let $\kappa$ be Laver-indestructible in the sense of (\ref{PL}) and $\Ups$ a weakly compact cardinal above $\kappa$ and let $\mu<\kappa$ be a fixed regular cardinal. Then there is a forcing notion $\Pm$ such that in $V[\Pm]$, $2^\kappa = \kappa^{++} = \Ups$, $\kappa$ is a singular strong limit cardinal of cofinality $\mu$ and the following hold:
\bce[(i)]
\item $\uu = \kappa^+$.
\item $\TP(\kappa^{++})$, $\SR(\kappa^{++})$ and $\neg \AP(\kappa^{++})$.
\ece
\end{theorem}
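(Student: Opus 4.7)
The plan is to define $\Pm = \P^*_\delta * \dot{\Qm}$ in direct analogy with Definition \ref{def:P}, where $\P^*_\delta$ is as in Definition \ref{def:Pbasic-s} and $\dot{\Qm}$ names the Magidor forcing in $V[\P^*_\delta]$ associated with some Mitchell sequence $\vec{U} = \seq{U_\gamma}{\gamma<\mu}$ of normal measures on $\kappa$. Since $\kappa$ remains supercompact in $V[\P_\delta]$ by the choice of the Laver preparation (see (\ref{PL})), such a sequence $\vec{U}$ exists in $V[\P_\delta]$. By (\ref{eq:Q}), $\P^*_\delta$ is equivalent to $\P_\delta * \dot{R}$ with $\dot{R}$ forced to be $\kappa^+$-distributive, so Lemma \ref{lm:vec-U} guarantees that $\vec{U}$ is still a Mitchell sequence of length $\mu$ in $V[\P^*_\delta]$, and $\Qm$ is well-defined there.

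For the small ultrafilter number, I would invoke the general form of Theorem \ref{GS:small:here}, observing (as noted in the paragraph introducing the theorem) that \cite[Theorem~1.4]{GS:small} is a $\ZFC$ result whose combinatorial content is insensitive to the cofinality being $\omega$ or an uncountable $\mu<\kappa$; the only structural input is the existence of suitable true-cofinality witnesses. In $V[\P^*_\delta]$, Lemma \ref{lm:key1} already supplies $\tcf(\prod_{\alpha<\kappa}\theta_\alpha,<_{U^*_\gamma})^{V[\P^*_\delta]} = \kappa^+$ for any $\gamma<\mu$ and any $\bar\theta \in V$. The Magidor analogue of Lemma \ref{lm:correction3}(ii) — explicitly noted in \cite[Remark 3.2]{GS:pol} — then transfers these equalities, in $V[\Pm]$, to products indexed over the Magidor sequence $\seq{\alpha_\xi}{\xi<\mu}$ modulo an ultrafilter $E$ extending $\Jbdpar{\mu}$ (using that each $\alpha_\xi$ is measurable in $V[\P_\delta]$ with $2^{\alpha_\xi}=\alpha_\xi^+$, so (v) of Theorem \ref{GS:small:here} holds). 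Applying the generalized version of Theorem \ref{GS:small:here} with $\bar\kappa = \seq{\alpha_\xi}{\xi<\mu}$ and $\bar\theta = \seq{\alpha_\xi^+}{\xi<\mu}$ yields $\uu \le \kappa^+\cdot\kappa^+ = \kappa^+$.

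For the three compactness principles, each follows from the corresponding lemma of Section \ref{sec:countable} with $\Qm$ in place of $\Q_U$, since what was actually used about $\Q_U$ was never the specific Prikry combinatorics but only chain condition and centeredness properties that $\Qm$ shares. For $\TP(\kappa^{++})$: the proof of Lemma \ref{tp} extracted only that $\dot{\Q}_U$ is $\kappa^+$-cc and that the whole two-step $\P_{[\lambda,\delta)}*\dot{\Q}_U$ can be coded as a $\P_\lambda$-name. Since $\vec{U}$ lies in $V[\P_\delta] = V[\P_\lambda][\P_{[\lambda,\delta)}]$, so does $\Qm$, and $\Qm$ is $\kappa^+$-cc (in fact $\kappa$-centered) by Section \ref{sec:Prk}; the indestructibility argument in Lemma \ref{tp} then goes through verbatim. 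For $\SR(\kappa^{++})$: stationary reflection holds in $V[\P^*_\lambda]$ by the standard Mitchell-style argument, and Theorem \ref{th:sr} preserves it under the $\kappa^+$-cc forcing $\P_{[\lambda,\delta)}*\dot{\Qm}$. For $\neg\AP(\kappa^{++})$: Lemma \ref{ap-delta} gives $\neg\AP(\kappa^{++})$ in $V[\P^*_\delta]$ (its proof depends only on the structure of $\P^*_\delta$ and not on the singularizing forcing), and then Theorem \ref{th:Gitik} applies because $\Qm$ is $\kappa$-centered.

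The main obstacle I expect is a careful verification of the Magidor analogue of Lemma \ref{lm:correction3}(ii): one must check that the generic sequences $\seq{g_{\bar\theta_\alpha}}{\alpha\in I}$ witnessing true cofinality modulo $\Jbd$ over $\kappa$ project correctly to witnesses of true cofinality modulo an ultrafilter $E$ on $\mu$ living on the Magidor sequence, and that Magidor forcing (which does add new bounded subsets of $\kappa$ along the Prikry-like sequence, but no new $<\kappa$-sequences from $V$ into $V$ beyond those coded by the generic itself) does not destroy the relevant $<_{E^*}$-increasing structure. This is precisely the content glossed in \cite[Remark 3.2]{GS:pol}; once it is in hand, everything else is a routine transcription of Section \ref{sec:countable}.
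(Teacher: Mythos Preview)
Your proposal is correct and follows essentially the same approach as the paper: define $\Pm = \P^*_\delta * \dot{\Qm}$ with $\vec{U}$ chosen in $V[\P_\delta]$, use Lemma \ref{lm:vec-U} to ensure $\vec{U}$ remains a Mitchell sequence in $V[\P^*_\delta]$, invoke \cite[Remark 3.2]{GS:pol} for the $\uu$ argument, and observe that the compactness proofs of Section \ref{sec:countable} only used the $\kappa^+$-cc and $\kappa$-centeredness of the singularizing forcing together with its living in $V[\P_\lambda]$. Your write-up is in fact more explicit than the paper's about why each step transfers, and your identification of the Magidor analogue of Lemma \ref{lm:correction3}(ii) as the one point requiring genuine verification matches what the paper defers to \cite[Remark 3.2]{GS:pol}.
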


\begin{proof}
Let $\Pm$ be defined as $\R$ in Definition \ref{def:P} with the Magidor forcing $\Qm$ instead of the Prikry forcing $\Q_U$. $\Qm$ is defined with respect to a sequence $\vec{U}$ of length $\mu$ (see Section \ref{sec:Prk}) and we take care to choose $\vec{U}$ which is an element of $V[\P_\delta]$ ($\kappa$ is supercompact in $V[\P_\delta]$ by arguments in \cite{GS:pol}, reviewed in (\ref{PL}), so this is possible). Moreover, by Lemma \ref{lm:vec-U}, $\vec{U}$ is still a Mitchell sequence in $V[\P^*_\delta]$ because $\P^*_\delta$ factors as $\P_\delta* \dot{R}$ for a $\kappa^+$-distributive $\dot{R}$. As in (\ref{equiv}), we get: \beq \Pm = \P^*_\delta * \dot{\Q}^{\mathrm{Mag}}_{\vec{U}} \equiv \P^*_\lambda * \dot{\Q},\eeq where $\dot{\Q} = \P_{[\lambda,\delta)} * \Qm$ is forced to be $\kappa^+$-cc and where we assume that $\dot{\Q}$ is a $\P_\lambda$-name.

The fact that $\Pm$ forces $\uu = \kappa^+$ follows as in Lemma \ref{4.5}, taking into account \cite[Remark 3.2]{GS:pol}.

The proof that the tree property holds at $\kappa^{++}$ follows by the same argument as in Lemma \ref{tp} because it uses just the fact that the forcing is $\kappa^+$-cc and lives in $V[\P_\lambda]$. The arguments for stationary reflection and the failure of approachability follow by the indestructibility Theorems \ref{th:sr} and \ref{th:Gitik}.
\end{proof}

\section{Open questions}\label{sec:open}

We expect that with little more work, the value of $2^\kappa$ could be made arbitrarily large while keeping $\uu$ at a prescribed cardinal. However, we have not verified this in detail, so let us ask the following explicit question:

\begin{question}Is it consistent to have a singular strong limit cardinal $\kappa$ with $\uu = \nu$ for a regular $\kappa^+ < \nu<2^\kappa$ and with the tree property, stationary reflection and the failure of approachability at $\kappa^{++}$?
\end{question}

In \cite{GS:pol}, Garti and Shelah say it is plausible that the assumption of a supercompact cardinal can be weakened to that of a hypermeasurable cardinal. It would seem that in the present proof the role of supercompactness of $\kappa$ is just to make sure that $\kappa$ stays measurable (or sufficiently large) in $V[\P_\delta]$, and it is known that there are methods to ensure this from weaker hypotheses (see for instance \cite{RH:Laver}). So it is plausible that the following has an affirmative answer (especially in view of the recent result in \cite{GGS:u}):

\begin{question} Is the consistency of $\kappa$ being singular strong limit, $\uu = \kappa^+$, $2^\kappa>\kappa^+$ with compactness at $\kappa^{++}$ provable from a weaker hypothesis than a supercompact cardinal?
\end{question}

Can we bring the result down to $\aleph_\omega$ or $\aleph_{\omega_1}$? Note that the method of \cite{HS:ind} may not be directly applicable here because the Prikry forcing with collapses is not necessarily definable in the right intermediate model to apply the tree property preservation argument.

\begin{question} Is it consistent to have $\aleph_\omega$ strong limit with $2^{\aleph_\omega}>\aleph_{\omega+1}$, $\mathfrak{u}(\aleph_\omega) = \aleph_{\omega+1}$ and the tree property at $\aleph_{\omega+2}$? Can a similar result be obtained for $\aleph_{\omega_1}$?
\end{question}

It is also possible to consider compactness at $\kappa^+$, for instance:

\begin{question} Is it consistent to have a singular strong limit cardinal $\kappa$ with $2^\kappa>\kappa^+$, $\uu = \kappa^+$ and the tree property at $\kappa^+$?
\end{question}

\end{document}